\documentclass[twoside,reqno,A4]{amsart}

\usepackage{latexsym,rotate,eucal,cite}
\usepackage{amsmath,amsthm,amssymb,amsxtra}
\usepackage{cite}

\newtheorem{theorem}{Theorem}[section]
\newtheorem{lemma}[theorem]{Lemma}

\theoremstyle{definition}

\theoremstyle{remark}
\newtheorem{remark}[theorem]{Remark}

\numberwithin{equation}{section}

\begin{document} 

\title{Biharmonic hypersurfaces with three distinct principal curvatures in spheres}
\dedicatory{\textsc{
    Yu Fu \\[1mm]}
School of Mathematics and
Quantitative Economics\\
Dongbei University of Finance and Economics\\
Dalian 116025, P. R. China}
\thanks{Email addresses: yufudufe@gmail.com}
\subjclass[2000]{53D12, 53C40, 53C42} \keywords{Biharmonic
submanifolds; Principal curvatures; Generalized Chen's conjecture}


\begin{abstract}
We obtain a complete classification of proper biharmonic
hypersurfaces with at most three distinct principal curvatures in
sphere spaces with arbitrary dimension. Precisely, together with
known results of Balmu\c{s}-Montaldo-Oniciuc, we prove that compact
orientable proper biharmonic hypersurfaces with at most three
distinct principal curvatures in sphere spaces $\mathbb S^{n+1}$ are
either the hypersphere $\mathbb S^n(1/\sqrt2)$ or the Clifford
hypersurface $\mathbb S^{n_1}(1/\sqrt2)\times\mathbb
S^{n_2}(1/\sqrt2)$ with $n_1+n_2=n$ and $n_1\neq n_2$. Moreover, we
also show that there does not exist proper biharmonic hypersurface
with at most three distinct principal curvatures in hyperbolic
spaces $\mathbb H^{n+1}$.
\end{abstract}

\maketitle
\markboth{Yu Fu}{Biharmonic hypersurfaces in spheres}
\thispagestyle{empty}

\section{Introduction}
\hspace*{\parindent}
It is well known that the theory of harmonic maps plays a central
roles in various fields in differential geometry. The harmonic maps
between two Riemannian manifolds are critical points of the energy
functional
\begin{eqnarray*}
E(\phi)=\frac{1}{2}\int_M|d\phi|^2v_g
\end{eqnarray*}
for smooth maps $\phi:
(M^n,g)\longrightarrow(\bar{M}^m,\langle,\rangle)$.

Biharmonic maps
$\phi:(M^n,g)\longrightarrow(\bar{M}^m,\langle,\rangle)$ between
Riemannian manifolds are critical points of the bienergy functional
\begin{eqnarray*}
E_2(\phi)=\frac{1}{2}\int_M|\tau(\phi)|^2v_g,
\end{eqnarray*}
where $\tau(\phi)= {\rm trace \nabla d\phi}$ is the tension field of
$\phi$ that vanishes for harmonic maps. For biharmonic map, the
bitension field satisfies the associated Euler-Lagrange equation
(see \cite{jiang1986})
\begin{eqnarray*}
\tau_2(\phi)=-\Delta\tau(\phi)-{\rm trace}
R^{\bar{M}}(d\phi,\tau(\phi))d\phi=0,
\end{eqnarray*}
where $R^{\bar{M}}$ is the curvature tensor
\begin{eqnarray*}
R^{\bar{M}}(U,V)=\nabla_U^{\bar{M}}\nabla_V^{\bar{M}}-
\nabla_V^{\bar{M}}\nabla_U^{\bar{M}}-\nabla_{[U, V]}^{\bar{M}},\quad
U, V\in X(\bar{M}),
\end{eqnarray*}
and $\Delta$ is the rough Laplacian given by
\begin{eqnarray*}
\Delta=-\sum_{k=1}^n(\nabla^{\phi}_{e_k}\nabla^{\phi}_{e_k}-\nabla^{\phi}_{\nabla^{M}_{e_k}e_k})
\end{eqnarray*}
for a local orthonormal frame field $\{e_k\}_{k=1}^n$ defined on
$(M^n, g)$.

The above equation shows that $\phi$ is a biharmonic map if and only
if its bi-tension field $\tau_2(\phi)$ vanishes. Equivalently, for
an immersion
$\phi:(M^n,g)\longrightarrow(\bar{M}^m,\langle,\rangle)$ between
Riemannian manifolds, the mean curvature vector field
$\overrightarrow{H}$ satisfies the following fourth order elliptic
semi-linear PDE
\begin{eqnarray}
\Delta\overrightarrow{H}+{\rm trace}\,
R^{\bar{M}}(d\phi,\overrightarrow{H})d\phi=0.
\end{eqnarray}
Obviously, any minimal immersion, i.e. immersion satisfying
$\overrightarrow{H}=0$, is biharmonic. The non-harmonic biharmonic
immersions are called proper biharmonic.

In a different setting, B. Y. Chen in the middle of 1980s initiated
the study of biharmonic submanifolds in a Euclidean space by the
condition $\Delta \overrightarrow{H}=0$, where $\Delta$ is the rough
Laplacian of submanifolds with respect to the induced metric. It is
easy to see that both notions of biharmonic submanifolds in
Euclidean spaces coincide with each other.

The study of biharmonic submanifolds is nowadays a very active
subject. There is a challenging biharmonic conjecture of B. Y. Chen
made in 1991 \cite{Chen1991}:

{\bf Chen's conjecture}: {\em The only biharmonic submanifolds of
Euclidean spaces are the minimal ones}.

Ten years later, in 2001 Caddeo, Montaldo and Oniciuc \cite{CMO2001}
made the following generalized Chen's conjecture:

{\bf Generalized Chen's conjecture}: {\em Every biharmonic
submanifold of a Riemannian manifold with non-positive sectional
curvature is minimal}.

Recently, the Generalized Chen's conjecture was proved to be wrong
by Y. L. Ou and L. Tang in \cite{Ou2012}, who constructed examples
of proper-biharmonic hypersurfaces in a 5-dimensional space of
non-constant negative sectional curvature. However, the original
Chen's conjecture is still open so far. Also, the Generalized Chen's
conjecture is still open in its full generality for ambient spaces
with constant non-positive sectional curvature. For more recent
developments of Chen's conjecture and Generalized Chen's conjecture,
please refer to Chen's recent survey article \cite{chen2013} and
reference therein.

In contrast, the class of proper biharmonic submanifolds in sphere
spaces is rather rich and very interesting. The complete
classifications of biharmonic hypersurfaces in $\mathbb S^3$ and
$\mathbb S^4$ were obtained by Balmu\c{s}, Caddeo, Montaldo and
Oniciuc in \cite{CMO2001, BMO20102}. Moreover, the authors in
\cite{BMO2008} classified biharmonic hypersurfaces with at most two
distinct principal curvatures in $\mathbb S^n$ with arbitrary
dimension. There are also some results on biharmonic submanifolds in
general ambient space, e.g. \cite{Ou2010}.

For what concerns biharmonic hypersurfaces with three distinct
principal curvatures in spheres, Balmu\c{s}-Montaldo-Oniciuc in
\cite{BMO20102} proved the following non-existence result: there do
not exist compact constant mean curvature (CMC) proper-biharmonic
hypersurfaces with three distinct principal curvatures in $\mathbb
S^n$ everywhere.

In the present paper, we concentrate on biharmonic hypersurfaces
with three distinct principal curvatures in space forms with
arbitrary dimension. Firstly, we prove that biharmonic hypersurface
$M^n$ with at most three distinct principal curvatures in space
forms necessarily has constant mean curvature. Combining with
Balmu\c{s} et al.'s nice work on this subject, we can achieve a
complete classification of compact proper biharmonic hypersurfaces
with at most three distinct principal curvatures in spheres with
arbitrary dimension, and without any other assumptions. Hence, our
results extend all the known results mentioned above for biharmonic
hypersurfaces in spheres. At last, with a similar argument we also
show that there does not exist proper biharmonic hypersurface with
at most three distinct principal curvatures in hyperbolic spaces
$\mathbb H^{n+1}$.

\section{Preliminaries}
Let $M^n$ be an orientable hypersurface isometrically immersed into
a space form $R^{n+1}(c)$ with constant sectional curvature $c$.
Denote the Levi-Civita connections of $M^n$ and $R^{n+1}(c)$ by
$\nabla$ and $\tilde\nabla$, respectively. Let $X$ and $Y$ denote
vector fields tangent to $M^n$ and let $\xi$ be a unite normal
vector field. Then the Gauss and Weingarten formulas (cf.
\cite{chenbook2011, chenbook1984}) are given, respectively, by
\begin{eqnarray}
\tilde\nabla_XY&=&\nabla_XY+h(X,Y),\label{l23}\\
\tilde\nabla_X\xi&=&-AX,\label{l16}
\end{eqnarray}
where $h$ is the second fundamental form, and $A$ is the Weingarten
operator. It is well known that the second fundamental form $h$ and
the Weingarten operator $A$ are related by
\begin{eqnarray}\label{l3}
\langle h(X,Y),\xi\rangle=\langle AX,Y\rangle.
\end{eqnarray}
The mean curvature vector field $\overrightarrow{H}$ is given by
\begin{eqnarray}
\overrightarrow{H}=\frac{1}{n}{\rm trace}~h.
\end{eqnarray}
Moreover, the Gauss and Codazzi equations are given respectively by
\begin{eqnarray*}
R(X,Y)Z=c(\langle Y, Z\rangle X-\langle X,Z\rangle Y)+\langle
AY,Z\rangle AX-\langle AX,Z\rangle AY,
\end{eqnarray*}
\begin{eqnarray*}
(\nabla_{X} A)Y=(\nabla_{Y} A)X,
\end{eqnarray*}
where $R$ is the curvature tensor of hypersurface $M^n$ and
$(\nabla_XA)Y$ is defined by
\begin{eqnarray}\label{l7}
(\nabla_XA)Y=(\nabla_X)AY-A(\nabla_XY)
\end{eqnarray}
for all $X, Y, Z$ tangent to $M^n$.

Assume that $\overrightarrow{H}=H\xi$ and $H$ denotes the mean
curvature.

By identifying the tangent and the normal parts of the biharmonic
condition (1.1) for hypersurfaces in space forms $R^{n+1}(c)$, we
obtain the following characterization result for $M^n$ to be
biharmonic (see also \cite{CMO2002, BMO20102, chenbook1984}).
\begin{theorem}
The immersion $x: M^n\rightarrow R^{n+1}(c)$ of a hypersurface $M^n$
in an $n+1$-dimensional space form $R^{n+1}(c)$ is biharmonic if and
only if
\begin{equation}
\begin{cases}
\Delta H+H {\rm trace}\, A^2 =ncH,\\
2A\,{\rm grad}H+n\, H{\rm grad}H=0,
\end{cases}
\end{equation}
\end{theorem}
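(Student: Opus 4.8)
The plan is to expand the biharmonic condition (1.1), which for an isometric immersion $\phi$ reads $\Delta\overrightarrow{H}+{\rm trace}\,R^{\bar M}(d\phi,\overrightarrow{H})d\phi=0$, by computing its two terms separately for a hypersurface $x\colon M^n\to R^{n+1}(c)$ with $\overrightarrow{H}=H\xi$, and then splitting the resulting identity into its parts tangent and normal to $M^n$.

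First I would fix a local orthonormal frame $\{e_i\}_{i=1}^n$ on $M^n$, geodesic at the point under consideration, so that the terms involving $\nabla_{e_i}e_i$ drop out (they are exactly cancelled by the $\nabla^{\phi}_{\nabla^{M}_{e_i}e_i}$ part of the rough Laplacian). From the Weingarten formula \eqref{l16} one gets $\tilde\nabla_{e_i}(H\xi)=(e_iH)\xi-HAe_i$, and differentiating once more, using the Gauss formula \eqref{l23} together with \eqref{l3} (so that $h(e_i,Ae_i)=|Ae_i|^2\xi$), yields
\[
\tilde\nabla_{e_i}\tilde\nabla_{e_i}(H\xi)=\bigl(e_ie_iH-H|Ae_i|^2\bigr)\xi-2(e_iH)Ae_i-H\nabla_{e_i}(Ae_i).
\]
Summing over $i$, and using $-\sum_i e_ie_iH=\Delta H$, $\sum_i|Ae_i|^2={\rm trace}\,A^2$ and $\sum_i(e_iH)Ae_i=A({\rm grad}\,H)$, the only remaining term is $\sum_i\nabla_{e_i}(Ae_i)=\sum_i(\nabla_{e_i}A)e_i$ by \eqref{l7}. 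Here the Codazzi equation $(\nabla_XA)Y=(\nabla_YA)X$ provides the one structural input beyond the Gauss--Weingarten formalism: it exhibits $A$ as a symmetric Codazzi tensor, whose divergence is the gradient of its trace, i.e. $\sum_i(\nabla_{e_i}A)e_i={\rm grad}\bigl({\rm trace}\,A\bigr)=n\,{\rm grad}\,H$. Collecting everything gives
\[
\Delta\overrightarrow{H}=\bigl(\Delta H+H\,{\rm trace}\,A^2\bigr)\xi+2A({\rm grad}\,H)+nH\,{\rm grad}\,H.
\]

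Next I would evaluate the curvature term using that $R^{n+1}(c)$ has constant sectional curvature $c$, so $R^{\bar M}(U,V)W=c(\langle V,W\rangle U-\langle U,W\rangle V)$; since $\langle\overrightarrow{H},e_i\rangle=0$, this gives ${\rm trace}\,R^{\bar M}(d\phi,\overrightarrow{H})d\phi=\sum_i R^{\bar M}(e_i,\overrightarrow{H})e_i=-ncH\xi$. Substituting both expressions into (1.1) and equating the component along $\xi$ and the component tangent to $M^n$ to zero separately yields precisely
\[
\Delta H+H\,{\rm trace}\,A^2=ncH,\qquad 2A\,{\rm grad}\,H+nH\,{\rm grad}\,H=0,
\]
while conversely these two equations recombine into (1.1); this establishes the equivalence. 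The whole argument is a direct computation, and I expect no real obstacle; the only points that need care are the sign convention for the rough Laplacian $\Delta$ fixed in the introduction and the Codazzi-type identity $\sum_i(\nabla_{e_i}A)e_i=n\,{\rm grad}\,H$.
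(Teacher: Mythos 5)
Your computation is correct and is exactly the argument the paper has in mind: it obtains Theorem 2.1 "by identifying the tangent and the normal parts of the biharmonic condition (1.1)" (citing Caddeo--Montaldo--Oniciuc and Chen for the details), which is precisely your expansion of $\Delta\overrightarrow{H}$ via Gauss--Weingarten, the Codazzi identity $\sum_i(\nabla_{e_i}A)e_i=n\,{\rm grad}\,H$, and the constant-curvature formula for the curvature term. No gaps; the sign conventions are handled consistently with the paper's definitions of $\Delta$ and $R^{\bar M}$.
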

Clearly, it follows from (2.6) that the only umbilical proper
biharmonic hypersurface in $\mathbb S^{n+1}$ is an open part of
$\mathbb S^n(1/\sqrt2)$.

Recall those known results on biharmonic hypersurfaces with at most
two distinct principal curvatures in $\mathbb S^{n+1}$ developed by
Balmu\c{s} et al. in the last ten years.
\begin{theorem} {\rm([5])}
Let $M^2$ be a proper biharmonic surface in spheres $\mathbb S^3$.
Then $M^2$ is an open part of $\mathbb S^2(1/\sqrt2)\subset\mathbb
S^3$.
\end{theorem}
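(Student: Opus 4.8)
The plan is to first prove that a proper biharmonic surface $M^2$ in $\mathbb S^3$ must have constant mean curvature, and then to identify it among the resulting isoparametric surfaces. Throughout I use Theorem 2.1 with $n=2$, $c=1$, so the biharmonic system reads $\Delta H + H\,\mathrm{trace}\,A^2 = 2H$ and $(A+H\,\mathrm{Id})\,\mathrm{grad}\,H = 0$, and I assume $H\not\equiv 0$.

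Suppose, for contradiction, that $\mathrm{grad}\,H\ne 0$ on some open set $U$. On $U$ the second equation says that $-H$ is a principal curvature with principal direction $\mathrm{grad}\,H$; since $H\ne 0$ the surface is non-umbilical there (otherwise $-H=H$), so the two principal curvatures are $\lambda_1=-H$ and $\lambda_2=3H$. Choose a local orthonormal frame $\{e_1,e_2\}$ with $e_1=\mathrm{grad}\,H/|\mathrm{grad}\,H|$, $Ae_1=-He_1$, $Ae_2=3He_2$; then $e_2H=0$ and $e_1H=|\mathrm{grad}\,H|>0$. Writing $\nabla_{e_1}e_1=ae_2$ and $\nabla_{e_2}e_1=be_2$, the Codazzi equation $(\nabla_{e_1}A)e_2=(\nabla_{e_2}A)e_1$ yields $a=0$ and $b=-\tfrac{3\,e_1H}{4H}$.

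Next I compute the Gauss curvature two ways: from the Gauss equation $K=1+\lambda_1\lambda_2=1-3H^2$, and from the structure equations (using $a=0$) $K=-(e_1b+b^2)$; together with the first biharmonic equation, which here becomes $-e_1e_1H - b\,e_1H + 10H^3 = 2H$, one eliminates $e_1e_1H$ and $b$ to obtain a first-order relation of the form $(e_1H)^2 = P(H)$ for an explicit polynomial $P$. Differentiating this relation along $e_1$ and substituting the already-derived expression for $e_1e_1H$ produces a nontrivial polynomial identity in $H$ alone whose only real solution is $H=0$; since $H\ne 0$ on $U$, and since a locally constant $H$ would itself contradict $\mathrm{grad}\,H\ne0$, this is impossible. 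Hence $\mathrm{grad}\,H\equiv0$ and $H$ is a nonzero constant.

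Finally, with $H$ constant the first equation forces $\mathrm{trace}\,A^2=2$, so $\lambda_1+\lambda_2=2H$ and $\lambda_1^2+\lambda_2^2=2$ are both constant; hence $\lambda_1\lambda_2$, and therefore each $\lambda_i$, is constant, i.e. $M^2$ is isoparametric in $\mathbb S^3$. If $M^2$ were non-umbilical on some open set, Codazzi would force that frame to be parallel there, giving $K=0$, while the Gauss equation gives $K=1+\lambda_1\lambda_2$; thus $\lambda_1\lambda_2=-1$ and $2H^2-1=-1$, i.e. $H=0$, a contradiction. Therefore $M^2$ is totally umbilical, and $\mathrm{trace}\,A^2=2H^2=2$ forces $H^2=1$; by the remark following Theorem 2.1, $M^2$ is an open part of $\mathbb S^2(1/\sqrt2)$. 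The main obstacle is the elimination in the third paragraph — keeping careful track of the sign convention for $\Delta$ and carrying the differentiation of $(e_1H)^2=P(H)$ through to the contradictory polynomial identity in $H$.
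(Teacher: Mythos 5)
Your proof is correct and follows essentially the same strategy the paper itself employs (its Section 3 argument, and the cited source [5], specialized to $n=2$): the tangential biharmonic equation makes $\mathrm{grad}\,H$ a principal direction with curvature $-H$, Codazzi plus the Gauss equation plus the normal biharmonic equation combine into a relation $(e_1H)^2=P(H)$ along integral curves of $e_1$, and differentiating it forces the contradictory identity $H\bigl(\tfrac{15}{2}H^2+\tfrac{7}{6}\bigr)=0$, so $H$ is constant and the CMC classification finishes the job. The only point to tidy is that you should first shrink the open set $U$ to the (open, dense) subset where $H\neq0$ before dividing by $H$ in the Codazzi computation, since proper biharmonicity only gives $H\not\equiv0$.
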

\begin{theorem} {\rm([3])}
Let $M^n$ be a proper biharmonic hypersurfaces with at most two
distinct principal curvatures in $\mathbb S^{n+1}$. Then $M^n$ is
either an open part of hypersphere $\mathbb S^n(1/\sqrt2)$ or
Clifford hypersurface $\mathbb S^{n_1}(1/\sqrt2)\times\mathbb
S^{n_2}(1/\sqrt2)$ with $n_1+n_2=n$ and $n_1\neq n_2$. Moreover, if
$M^n$ is complete, then either $M^n$ is the hypersphere $\mathbb
S^n(1/\sqrt2)$ or the Clifford hypersurface $\mathbb
S^{n_1}(1/\sqrt2)\times\mathbb S^{n_2}(1/\sqrt2)$ with $n_1+n_2=n$
and $n_1\neq n_2$.
\end{theorem}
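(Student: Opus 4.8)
The plan is to work from the biharmonic system of Theorem~2.1 with $c=1$, to prove first that $H$ must be constant, and then to appeal to the classical classification of isoparametric hypersurfaces of the sphere. Since $M^n$ is proper biharmonic, $H\not\equiv0$. If $M^n$ is totally umbilical, the remark following (2.6) already identifies it with an open part of $\mathbb S^n(1/\sqrt2)$; so assume $M^n$ is not totally umbilical, and suppose, for contradiction, that $U:=\{p\in M^n:({\rm grad}\,H)(p)\neq0\}$ is nonempty. On $U$ the second equation of (2.6) gives $A({\rm grad}\,H)=-\tfrac n2 H\,{\rm grad}\,H$, so one of the (at most two) principal curvatures is $\lambda_1=-\tfrac n2 H$, with some multiplicity $k$, and the other, $\lambda_2$, has multiplicity $n-k$. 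Taking the trace of $A$, $nH=k\lambda_1+(n-k)\lambda_2$, whence $\lambda_2=\tfrac{(k+2)nH}{2(n-k)}$; in particular both principal curvatures, and hence ${\rm trace}\,A^2=k\lambda_1^2+(n-k)\lambda_2^2$, are explicit functions of $H$ alone on $U$.

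Next I would fix a local orthonormal frame $\{e_1,\dots,e_n\}$ of principal directions with $e_1={\rm grad}\,H/|{\rm grad}\,H|$, so that $e_iH=0$ for $i\ge2$, and unwind the Codazzi equation $(\nabla_XA)Y=(\nabla_YA)X$ for the various choices of $X,Y$ among the $e_i$. This is the standard isoparametric-type computation; it shows that each eigendistribution is integrable, that a multiplicity-$\geq2$ eigendistribution has totally umbilical leaves (when $k=1$ the level hypersurfaces of $H$ are exactly the $\lambda_2$-leaves), and it pins down the connection coefficients $\langle\nabla_{e_i}e_j,e_\ell\rangle$ in terms of $\lambda_1,\lambda_2$ and $e_1(H)$. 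Substituting this into the first equation of (2.6), with $\Delta H=-e_1e_1(H)+e_1(H)\sum_{i\ge2}\langle\nabla_{e_i}e_i,e_1\rangle$, turns it into a second-order ODE for $H$ along the integral curves of $e_1$. A further relation --- coming from the Gauss equation together with the integrability conditions for the frame --- lets me eliminate the terms $e_1e_1(H)$ and $e_1(H)^2$ and reach a polynomial identity $P(H)=0$ with constant coefficients and $P\not\equiv0$. Since $H$ is non-constant on the connected set $U$, this is impossible; hence $U=\varnothing$ and $H$ is constant on $M^n$.

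Once $H$ is constant we have $\Delta H=0$ and $H\neq0$, so the first equation of (2.6) reduces to ${\rm trace}\,A^2=n$. Constancy of $H$ and of ${\rm trace}\,A^2$, together with the hypothesis of at most two distinct principal curvatures (and the real-analyticity of biharmonic hypersurfaces), then forces the principal curvatures themselves to be constant, so $M^n$ is an isoparametric hypersurface of $\mathbb S^{n+1}$ with two distinct principal curvatures. By the classical result of Cartan, $M^n$ is an open part of a generalized Clifford torus $\mathbb S^{n_1}(r)\times\mathbb S^{n_2}(\sqrt{1-r^2})$ with $n_1+n_2=n$, whose principal curvatures are $-\sqrt{1-r^2}/r$ and $r/\sqrt{1-r^2}$ of multiplicities $n_1$ and $n_2$. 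The condition ${\rm trace}\,A^2=n$ then reads $2nr^4-(3n_1+n_2)r^2+n_1=0$, whose two roots are $r^2=\tfrac12$ and $r^2=\tfrac{n_1}{n}$; the root $r^2=n_1/n$ gives the minimal Clifford torus and is excluded since $M^n$ is proper, so $r^2=\tfrac12$ and $M^n$ is an open part of $\mathbb S^{n_1}(1/\sqrt2)\times\mathbb S^{n_2}(1/\sqrt2)$. This torus has mean curvature $H=(n_2-n_1)/n$, hence is non-minimal --- i.e. genuinely proper biharmonic --- precisely when $n_1\neq n_2$. Finally, if $M^n$ is complete, a standard argument (the two model hypersurfaces are themselves complete, and the subset of $M^n$ on which the immersion coincides with the corresponding local model is open and closed) promotes ``open part of'' to the full hypersphere or Clifford torus.

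The main obstacle is the Codazzi/structure-equation analysis together with the ensuing elimination in the second paragraph: one must control enough of the connection coefficients both to integrate the eigendistributions and to collapse the biharmonic system into a single polynomial constraint on $H$, and the bookkeeping genuinely depends on the multiplicity $k$ of $\lambda_1$ --- the case $k=1$, where $e_1$ spans its eigenspace and the level hypersurfaces of $H$ are the $\lambda_2$-leaves, being the cleanest, while $k\ge2$ requires additional care.
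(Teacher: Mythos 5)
This statement is Theorem~2.3 of the paper, which is quoted from reference [3] (Balmu\c{s}--Montaldo--Oniciuc) without proof, so there is no in-paper argument to compare against; the closest analogue is Section~3, where the author carries out the same type of computation in the harder three-curvature setting. Your outline is essentially the standard proof from [3]: use the tangential biharmonic equation to see that $-\tfrac n2 H$ is a principal curvature wherever ${\rm grad}\,H\neq0$, run the Codazzi/Gauss analysis along the integral curves of $e_1$ to force a nontrivial polynomial identity $P(H)=0$ and hence $H$ constant, then combine ${\rm trace}\,A^2=n$ with constancy of $H$ to get an isoparametric hypersurface, invoke Cartan, and solve $2nr^4-(3n_1+n_2)r^2+n_1=0$ to isolate $r^2=\tfrac12$ (the other root $r^2=n_1/n$ being the minimal torus). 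Those algebraic steps check out, including the mean curvature count that yields $n_1\neq n_2$.

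Two remarks on the middle of your argument. First, the case distinction on the multiplicity $k$ of the eigenvalue $-\tfrac n2 H$ is vacuous: if $k\ge2$, Codazzi applied to two independent eigenvectors $e_1,e_2$ of that eigenvalue gives $e_1(\lambda_2)=(\lambda_1-\lambda_2)\omega_{21}^2=0$, i.e.\ $e_1(H)=0$, contradicting ${\rm grad}\,H\neq0$ (this is exactly the claim ``$\lambda_j\neq\lambda_1$'' proved around (3.8) in the paper). So only $k=1$ occurs and the ``additional care for $k\ge2$'' is not needed. Second, the crux of the whole theorem --- that the Gauss-equation relation and the biharmonic ODE can actually be combined to eliminate $e_1e_1(H)$ and $e_1(H)^2$ and leave a \emph{nontrivial} constant-coefficient polynomial in $H$ --- is asserted rather than executed; nontriviality of $P$ is precisely what requires the explicit bookkeeping (compare the computation of $a_{90}\neq0$ in Section~3). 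As a proposal the route is correct and is the one taken in [3], but that elimination is the step that must be written out before this counts as a proof.
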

For biharmonic hypersurfaces with at most three distinct principal
curvatures, Balmu\c{s} et al. obtained in \cite{BMO20102} the
following results.
\begin{theorem}
Let $M^3$ be a biharmonic hypersurface of the space form $\mathbb
E^4(c)$. Then $M^3$ has constant mean curvature.
\end{theorem}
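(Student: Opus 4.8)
The plan is to argue by contradiction. Suppose $H$ is not constant, so that $U:=\{p\in M^3:({\rm grad}\,H)(p)\neq0\}$ is a nonempty open set; it suffices to reach a contradiction on $U$. On $U$ the second equation of the biharmonic system (2.6) with $n=3$ reads $2A\,{\rm grad}\,H=-3H\,{\rm grad}\,H$, so $e_1:={\rm grad}\,H/|{\rm grad}\,H|$ is a unit principal direction with principal curvature $\lambda_1=-\tfrac32 H$; in particular $H\neq0$, hence $\lambda_1\neq0$, on $U$. Extend $e_1$ to a local orthonormal frame $\{e_1,e_2,e_3\}$ of principal directions with $Ae_i=\lambda_ie_i$. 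Then $e_1(H)=|{\rm grad}\,H|\neq0$ while $e_2(H)=e_3(H)=0$ on $U$, and from ${\rm trace}\,A=3H$ we get $\lambda_2+\lambda_3=\tfrac92 H$.

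Write $\nabla_{e_i}e_j=\sum_k\omega_{ij}^ke_k$. Unwinding the Codazzi equation $(\nabla_XA)Y=(\nabla_YA)X$ in this frame gives the standard relations $e_j(\lambda_i)=(\lambda_i-\lambda_j)\omega_{ii}^j$ for $i\neq j$, and $(\lambda_j-\lambda_k)\omega_{ij}^k=(\lambda_i-\lambda_k)\omega_{ji}^k$ for distinct $i,j,k$. If $\lambda_1$ had multiplicity at least two on part of $U$, one could choose a unit principal direction $\tilde e\perp e_1$ with $A\tilde e=\lambda_1\tilde e$, and the first Codazzi relation for the pair $(\tilde e,e_1)$ would force $e_1(\lambda_1)=0$, contradicting $e_1(H)\neq0$; hence $\lambda_1$ is simple on $U$. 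Taking $i=1$ in the first Codazzi relation and using $e_j(\lambda_1)=-\tfrac32 e_j(H)=0$ then gives $\omega_{11}^2=\omega_{11}^3=0$, so $\nabla_{e_1}e_1=0$ and the integral curves of $e_1$ are geodesics. Consequently, on $U$ we are in one of two cases: (a) exactly two distinct principal curvatures, $\lambda_1=-\tfrac32 H$ (simple) and $\lambda_2=\lambda_3=\tfrac94 H$ (double); or (b) three distinct principal curvatures. It is enough to exclude each case on the open dense subset of $U$ where this dichotomy is locally constant.

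In case (a) one has ${\rm trace}\,A^2=\tfrac{99}{8}H^2$, and Codazzi gives $\omega_{22}^1=\omega_{33}^1=\tfrac{3e_1(H)}{5H}$, the remaining connection coefficients playing no role in what follows. With the sign convention of Section~2 one finds $\Delta H=-e_1(e_1H)+\tfrac{6(e_1H)^2}{5H}$, so the first equation of (2.6) is one relation between $e_1(e_1H)$ and $(e_1H)^2$; on the other hand the Gauss equation $\langle R(e_1,e_2)e_2,e_1\rangle=c+\lambda_1\lambda_2$, computed directly from the frame, gives $e_1\!\bigl(\tfrac{3e_1H}{5H}\bigr)=c-\tfrac{27}{8}H^2$, a second relation. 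Eliminating $e_1(e_1H)$ between the two yields $(e_1H)^2=\tfrac{70c}{3}H^2-90H^4$; differentiating this identity along $e_1$ (legitimate since $e_1H\neq0$) and comparing the result with the value of $e_1(e_1H)$ obtained above forces $H^2=-\tfrac{8c}{405}$, a constant, which contradicts ${\rm grad}\,H\neq0$ on $U$ (and, when $c\geq0$, already contradicts $H\neq0$).

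Case (b) is the technical core. Put $\lambda_2=\mu$ and $\lambda_3=\nu=\tfrac92 H-\mu$, both distinct from each other and from $\lambda_1$. Codazzi now expresses $e_1(\mu),e_1(\nu),e_2(\mu),e_3(\mu)$ and the coefficients $\omega_{22}^1,\omega_{33}^1,\omega_{22}^3,\omega_{33}^2,\omega_{21}^3,\omega_{31}^2$ all in terms of $H,\mu,\nu$ and a single residual ``rotation'' coefficient $v:=\omega_{12}^3$, while $\nabla_{e_1}e_1=0$ as before. The first equation of (2.6) again gives $\Delta H$ in terms of $e_1(e_1H)$, $(e_1H)^2$ and $\mu$; the Gauss equation evaluated on the pairs $(e_1,e_2)$, $(e_1,e_3)$, $(e_2,e_3)$, together with its vanishing mixed components $R(e_i,e_j)e_k=0$ for distinct $i,j,k$, supplies first-order equations for the $e_1$-derivatives of the connection coefficients. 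The plan is to replay the maneuver of case (a) with the extra scalar $\mu$ present: differentiate $\lambda_2+\lambda_3=\tfrac92 H$ and the identity for ${\rm trace}\,A^2$ along $e_1$, substitute the Gauss- and biharmonic-equation expressions for the resulting derivatives, and iterate the differentiation until the system closes into a polynomial identity in $H$ and $\mu$ that is incompatible with $e_1H\neq0$. I expect the main obstacle to be precisely this bookkeeping: controlling the rotation coefficient $v$ and the transverse derivatives $e_2(\mu),e_3(\mu)$ (equivalently, showing that the relevant quantities propagate as ordinary differential equations along the $e_1$-geodesics), and arranging the elimination so that a single scalar contradiction survives. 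Once case (b) is ruled out, the assumption that $H$ is non-constant collapses, and $M^3$ has constant mean curvature.
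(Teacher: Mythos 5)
Your proposal does not complete the proof: case (b) -- three distinct principal curvatures -- is left as a plan rather than an argument. You write that you will ``differentiate \dots substitute \dots and iterate the differentiation until the system closes into a polynomial identity,'' and you yourself flag the unresolved obstacle: controlling the rotation coefficient $v=\omega_{12}^3$ and the transverse derivatives $e_2(\mu),e_3(\mu)$. This is exactly where the difficulty of the $n=3$ theorem lives. When $n\geq 4$ and there are three distinct principal curvatures, at least one of them has multiplicity $\geq 2$, and the Codazzi relation applied \emph{within} that eigenspace forces the eigenvalue to be constant along all of $e_2,\dots,e_n$ (this is the role of (3.11)--(3.13) in Section 3); everything then propagates as an ODE system along the $e_1$-geodesics and the elimination can be carried out. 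For $n=3$ with three distinct curvatures every eigenvalue is simple, that mechanism is unavailable, and one must genuinely prove that $e_2(\mu)$, $e_3(\mu)$ and $v$ do not obstruct the reduction. This is why the present paper restricts its Section 3 argument to $n\geq4$ and quotes the $M^3\subset\mathbb{E}^4(c)$ case from Balmu\c{s}--Montaldo--Oniciuc, where a separate and substantial computation handles precisely this configuration. Until case (b) is actually closed, the theorem is not proved.

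Two secondary points. First, in case (a) your Gauss-equation relation should carry the quadratic term: with $f=\omega_{22}^1=\tfrac{3e_1(H)}{5H}$ and $\nabla_{e_1}e_1=0$ one gets $e_1(f)-f^2=c+\lambda_1\lambda_2=c-\tfrac{27}{8}H^2$, not $e_1(f)=c-\tfrac{27}{8}H^2$ (compare (3.36), where $A=-f$ and $e_1(A)+A^2=-\lambda_1\alpha-c$); the elimination strategy survives but the explicit constants you report do not. Second, $\operatorname{grad}H\neq0$ at a point does not by itself give $H\neq0$ there; you should shrink to the (still open, dense in $U$) subset where additionally $H\neq0$ before dividing by $H$. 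Neither of these is fatal, but the missing case (b) is.
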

\begin{theorem}
The only compact proper biharmonic hypersurfaces of $\mathbb S^4$
are the hypersphere $\mathbb S^3(1/\sqrt2)$ and the torus $\mathbb
S^1(1/\sqrt2)\times\mathbb S^2(1/\sqrt2)$.
\end{theorem}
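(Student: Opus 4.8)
The plan is to assemble the quoted structural results through a case analysis on the number of distinct principal curvatures, the point being that a hypersurface of $\mathbb S^4$ has at most three of them at each point. Let $M^3$ be a compact proper biharmonic hypersurface of $\mathbb S^4=\mathbb E^4(1)$. By Theorem 2.4, $M^3$ has constant mean curvature $H$, and $H\neq 0$ since $M^3$ is not minimal. Substituting ${\rm grad}\,H=0$ and $\Delta H=0$ into the biharmonic system of Theorem 2.1 with $c=1$ and $n=3$, the second equation holds automatically and the first reduces to ${\rm trace}\,A^2=3$. Hence $M^3$ is a constant mean curvature hypersurface of $\mathbb S^4$ with $|A|^2=3$; equivalently, the first two elementary symmetric functions of the principal curvatures, $\sigma_1=3H$ and $\sigma_2=\tfrac12(9H^2-3)$, are constants (with $\sigma_1\neq 0$).

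I would then split $M^3$ according to the number of distinct principal curvatures. Let $U\subseteq M^3$ be the (open) set where there are exactly three distinct principal curvatures — openness holds because the principal curvatures depend continuously on the point, so any gap between them persists locally. On $M^3\setminus U$ there are at most two distinct principal curvatures. If $U=\emptyset$, then $M^3$ has at most two distinct principal curvatures everywhere, and Theorem 2.3 together with completeness ($M^3$ compact $\Rightarrow$ complete) forces $M^3$ to be the round sphere $\mathbb S^3(1/\sqrt2)$ or a Clifford hypersurface $\mathbb S^{n_1}(1/\sqrt2)\times\mathbb S^{n_2}(1/\sqrt2)$ with $n_1+n_2=3$ and $n_1\neq n_2$; since $\{n_1,n_2\}=\{1,2\}$ is then the only possibility, $M^3$ is $\mathbb S^1(1/\sqrt2)\times\mathbb S^2(1/\sqrt2)$. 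These are exactly the two hypersurfaces in the statement, so it remains to rule out $U\neq\emptyset$.

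To treat $U\neq\emptyset$ I would show it forces three distinct principal curvatures \emph{everywhere} on $M^3$; then the Balmu\c{s}--Montaldo--Oniciuc non-existence theorem recalled in the Introduction (no compact CMC proper biharmonic hypersurface of $\mathbb S^n$ has three distinct principal curvatures everywhere) applies with $n=4$ and gives the contradiction. The mechanism: a biharmonic hypersurface solves a fourth-order elliptic semilinear system with real-analytic coefficients, hence is real-analytic in suitable coordinates, so $\det A$ is a real-analytic function on $M^3$. Since $\sigma_1,\sigma_2$ are fixed constants, the discriminant of the characteristic polynomial $t^3-\sigma_1t^2+\sigma_2t-\det A$ is a fixed quadratic in the single variable $\det A$, positive precisely on an open interval $I$; thus $U=\{p\in M^3:\det A(p)\in I\}$, and on $\partial U$ a repeated principal curvature $\mu$ must satisfy $3\mu^2-2\sigma_1\mu+\sigma_2=0$, which pins all principal curvatures (and $\det A$) on $\partial U$ to finitely many constant values. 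Differentiating $\sigma_1,\sigma_2={\rm const}$ along $U$ shows that ${\rm grad}\,\lambda_2$ and ${\rm grad}\,\lambda_3$ are pointwise proportional to ${\rm grad}\,\lambda_1$, so on $U$ the Codazzi equations in the principal frame determine the connection coefficients, make the distribution orthogonal to ${\rm grad}\,\lambda_1$ integrable with leaves of controlled second fundamental form, and reduce the problem to an essentially one-dimensional ODE for $\det A$ along the integral curves of ${\rm grad}\,\lambda_1/|{\rm grad}\,\lambda_1|$. Combining this ODE with the boundary rigidity above and with real-analyticity, one concludes that $\det A$ cannot reach the endpoints of $I$ on a compact manifold, i.e. $U=M^3$.

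The main obstacle is exactly this last step — excluding the ``mixed'' configuration in which three distinct principal curvatures occur on a proper open subset of the compact manifold. Concretely, the difficulty is to control the structure (Codazzi) equations and the function $\det A$ across the interface $\partial U$ and to show the governing ODE admits no solution that closes up into a compact hypersurface; once $U=M^3$ is in hand the result drops out of the cited non-existence theorem, and everything else is bookkeeping with Theorems 2.1, 2.3 and 2.4.
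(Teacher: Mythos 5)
Your reduction is sound up to and including the two-curvature case: Theorem 2.4 gives $H$ constant, the first equation of (2.6) with $c=1$, $n=3$ then gives $\mathrm{trace}\,A^2=3$, and Theorem 2.3 plus compactness disposes of the case where $M^3$ has at most two distinct principal curvatures everywhere. (For the record, the paper does not prove this statement itself --- Theorem 2.5 is quoted from Balmu\c{s}--Montaldo--Oniciuc \cite{BMO20102} --- so the comparison is really with the argument in that reference.)

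The genuine gap is exactly the step you flag yourself: excluding the ``mixed'' configuration in which the open set $U$ of points with three distinct principal curvatures is a nonempty proper subset of $M^3$. What you offer there is a heuristic, not a proof. The real-analyticity of $\det A$ is asserted rather than established; more seriously, the pivotal claim that ``$\det A$ cannot reach the endpoints of $I$ on a compact manifold'' is only announced, and the announced mechanism (an ODE for $\det A$ along integral curves of $\mathrm{grad}\,\lambda_1$) is never set up --- $\mathrm{grad}\,\lambda_1$ may vanish, the interface $\partial U$ may contain umbilic points as well as points with a double curvature, and nothing in the sketch shows that the boundary values pinned down by the discriminant are actually unattainable. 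Until $U=M^3$ (or $U=\emptyset$) is proved, you cannot invoke the non-existence result of Theorem 2.6, which requires three distinct principal curvatures \emph{everywhere}. The missing idea, and the one used in the cited source, is this: since $H$ and $\mathrm{trace}\,A^2$ are constant, the Gauss equation makes the scalar curvature of $M^3$ constant, and a theorem of S.~Chang asserts that a closed hypersurface of $\mathbb S^4$ with constant mean curvature and constant scalar curvature is isoparametric. Isoparametricity forces the number of distinct principal curvatures to be globally constant --- which eliminates the mixed case at a stroke --- and reduces the three-curvature case to Cartan's explicit family of isoparametric hypersurfaces with $g=3$ in $\mathbb S^4$, on which the biharmonic conditions $H\neq 0$ and $\mathrm{trace}\,A^2=3$ are checked directly to be incompatible. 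You should replace your analytic-continuation sketch with that argument, or else supply a genuine proof of the dichotomy you need.
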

\begin{theorem}
There exist no compact proper biharmonic hypersurfaces of constant
mean curvature and with three distinct principal curvatures in the
unit Euclidean spheres.
\end{theorem}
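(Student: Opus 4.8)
The plan is to use the constant mean curvature hypothesis to collapse the biharmonic system (2.6) to the single pointwise equation $|A|^{2}=n$, then to deduce that $M$ must be isoparametric, and finally to eliminate the surviving candidates by an elementary identity.

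\emph{Step 1: reduction to $|A|^{2}=n$.} Put $c=1$ and let $H\neq 0$ be the constant mean curvature. Since $\operatorname{grad}H=0$, the second equation of (2.6) holds trivially, and the first becomes $H(\operatorname{tr}A^{2}-n)=0$, whence $|A|^{2}=\operatorname{tr}A^{2}=n$. By Cauchy--Schwarz, $|A|^{2}\ge nH^{2}$, with equality only at umbilic points; since $M$ has three distinct principal curvatures everywhere this forces $0<H^{2}<1$, and the traceless operator $\phi=A-HI$ satisfies $|\phi|^{2}=n(1-H^{2})>0$, a positive constant. In particular, by the Gauss equation $M$ has constant scalar curvature.

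\emph{Step 2: $M$ is isoparametric.} Because there are exactly three distinct principal curvatures at each point, $\lambda_{1}>\lambda_{2}>\lambda_{3}$ are globally defined smooth functions with constant multiplicities $m_{1},m_{2},m_{3}$, $m_{1}+m_{2}+m_{3}=n$, subject to $\sum_{i} m_{i}\lambda_{i}=nH$ and $\sum_{i} m_{i}\lambda_{i}^{2}=n$. I claim that each $\lambda_{i}$ is constant. To prove this I would combine the Codazzi equation, which governs the derivatives of the $\lambda_{i}$ along and transverse to the eigendistributions $D_{i}=\ker(A-\lambda_{i}I)$, with a Simons-type identity and the self-adjointness of the Cheng--Yau operator $\square u=\operatorname{div}\!\big((nH\,I-A)\operatorname{grad}u\big)$ (which is in divergence form precisely because $H$ is constant), and then integrate the resulting identities over the compact manifold $M$ to conclude $\operatorname{grad}\lambda_{i}\equiv 0$. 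I expect this to be the main obstacle: on a non-compact hypersurface genuinely non-isoparametric solutions are not excluded, so the argument must really use that $M$ is closed, together with a well-chosen test function.

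\emph{Step 3: classification and contradiction.} An isoparametric hypersurface of $\mathbb S^{n+1}$ with exactly three distinct principal curvatures is, by Cartan's classification, a Cartan hypersurface: $m_{1}=m_{2}=m_{3}=:m$, $n=3m$, and for an appropriate choice of orientation the principal curvatures are $\cot\theta,\ \cot(\theta+\tfrac{\pi}{3}),\ \cot(\theta+\tfrac{2\pi}{3})$ with $\theta$ a constant. For these three numbers a direct computation (equivalently, Cartan's identity) gives $e_{1}:=\sum_{i} k_{i}=3\cot 3\theta$ and $e_{2}:=\sum_{i<j}k_{i}k_{j}=-3$, so that
\begin{equation*}
|A|^{2}=m\sum_{i} k_{i}^{2}=m\,(e_{1}^{2}-2e_{2})=m\,(9\cot^{2}3\theta+6)\geq 6m=2n .
\end{equation*}
This contradicts $|A|^{2}=n$ from Step 1. (Alternatively, $\operatorname{grad}\lambda_{i}\equiv 0$ forces $\nabla A=0$, and inserting $|A|^{2}=n$ into Simons' formula then yields $\operatorname{tr}A^{3}=0$, i.e. $\cot 3\theta=0$, the minimal Cartan hypersurface with $H=0$, contradicting properness.) Hence there is no compact proper biharmonic hypersurface of constant mean curvature with three distinct principal curvatures in $\mathbb S^{n+1}$.
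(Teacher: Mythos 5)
This statement is the paper's Theorem 2.6, which is quoted from Balmu\c{s}--Montaldo--Oniciuc \cite{BMO20102} without proof, so there is no internal argument to compare yours against; I can only assess your proposal on its own terms. Your Step 1 is correct and standard: with $H$ a nonzero constant the first equation of (2.6) gives $\mathrm{trace}\,A^2=nc=n$, and Cauchy--Schwarz then gives $0<H^2<1$ and constant scalar curvature. Your Step 3 is also correct \emph{given} that $M$ is isoparametric: by Cartan--M\"unzner, $g=3$ forces equal multiplicities $m\in\{1,2,4,8\}$ and principal curvatures $\cot(\theta+(j-1)\pi/3)$, and the Vieta computation $\sum_i k_i^2=9\cot^2 3\theta+6$ yields $|A|^2\geq 6m=2n>n$, a genuine contradiction. (Your parenthetical alternative is wrong, though: $\mathrm{grad}\,\lambda_i\equiv 0$ does \emph{not} force $\nabla A=0$ --- the Cartan hypersurfaces are isoparametric but never have parallel shape operator, since parallel $A$ implies at most two distinct principal curvatures for hypersurfaces of spheres. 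The displayed inequality is the argument you must rely on.)

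The genuine gap is Step 2, and you have flagged it yourself. The assertion that a compact hypersurface of $\mathbb S^{n+1}$ with constant mean curvature, constant scalar curvature and exactly three distinct principal curvatures everywhere must be isoparametric is the entire difficulty of the theorem; it is not a routine consequence of ``Codazzi plus a Simons-type identity plus the Cheng--Yau operator plus integration,'' and no such computation is actually carried out in your text. Even in the lowest case $M^3\subset\mathbb S^4$ this is S.~Chang's theorem, a substantial and delicate paper in its own right; for arbitrary $n$ and arbitrary multiplicities $m_1,m_2,m_3$ I know of no off-the-shelf result in the generality you need, so you cannot simply cite one either. Note also that the two constant symmetric functions $\sum_i m_i\lambda_i=nH$ and $\sum_i m_i\lambda_i^2=n$ only confine $(\lambda_1,\lambda_2,\lambda_3)$ to a one-parameter curve; a further global argument exploiting compactness and, very likely, more of the biharmonic structure than the single identity $|A|^2=n$ is required to pin the curvatures down. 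As written, the proof is incomplete at its central step.
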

\section{Biharmonic hypersurfaces with three distinct principal curvatures in $ R^{n+1}(c)$}
We will concentrate on an orientable biharmonic hypersurface $M^n$
in a space form $R^{n+1}(c)$ with $n\geq4$. With the techniques
developed by B. Y. Chen in [10] (see also [4, 12-15]), we firstly
prove the following result.
\begin{theorem}
Let $M^n$ be an orientable proper biharmonic hypersurface with at
most three distinct principal curvatures in $R^{n+1}(c)$. Then $M^n$
has constant mean curvature.
\end{theorem}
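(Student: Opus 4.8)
The plan is to argue by contradiction, assuming $H$ is non-constant on some open set $U\subset M^n$, and to derive a contradiction from the biharmonic system (2.6). On $U$, the second equation of (2.6), namely $2A\,\mathrm{grad}\,H+nH\,\mathrm{grad}\,H=0$, shows that $\mathrm{grad}\,H$ is an eigenvector of $A$ with eigenvalue $-nH/2$; hence $e_1:=\mathrm{grad}\,H/|\mathrm{grad}\,H|$ is a principal direction with principal curvature $\lambda_1=-nH/2$. Since $M^n$ has at most three distinct principal curvatures, we may complete $e_1$ to a local orthonormal frame $\{e_1,\dots,e_n\}$ of principal directions, with principal curvatures grouped into at most three values $\lambda_1,\lambda_2,\lambda_3$ of multiplicities $1,p,q$ (with $1+p+q=n$, allowing the degenerate cases where fewer than three distinct values actually occur). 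The trace identities give $nH=\lambda_1+p\lambda_2+q\lambda_3$ and the first equation of (2.6) becomes $\Delta H+H(\lambda_1^2+p\lambda_2^2+q\lambda_3^2)=ncH$.

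Next I would extract the connection coefficients from the Codazzi equation $(\nabla_XA)Y=(\nabla_YA)X$. Writing $\nabla_{e_i}e_j=\sum_k\omega_{ij}^k e_k$, and using that $e_1$ is the only direction along which $H$ (hence $\lambda_1$) varies (because $\mathrm{grad}\,\lambda_1=-\tfrac{n}{2}\mathrm{grad}\,H$ is parallel to $e_1$, so $e_2(\lambda_1)=\dots=e_n(\lambda_1)=0$), the Codazzi equations relate the derivatives of the $\lambda_a$ to the $\omega_{ij}^k$. In particular, comparing $(\nabla_{e_1}A)e_i$ with $(\nabla_{e_i}A)e_1$ for $i$ in the $\lambda_2$- or $\lambda_3$-eigenspace yields $e_i(\lambda_1)=0$ automatically and forces relations such as $(\lambda_1-\lambda_a)\omega_{1i}^i=-e_i(\lambda_a)$ type identities; and comparing two directions $e_i,e_j$ within the same eigenspace gives that the eigendistributions are involutive with good properties. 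The upshot I am aiming for is a strong structural restriction: the $\lambda_2$- and $\lambda_3$-eigendistributions are totally geodesic (or at least their leaves are umbilical in $M$), and the functions $\lambda_2,\lambda_3$ are constant along those leaves, so everything is effectively governed by ODEs in the integral curve direction of $e_1$.

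Then I would set $s$ to be the arclength parameter along the integral curves of $e_1$, so that $H=H(s)$, $\lambda_a=\lambda_a(s)$, and $\Delta H = -H''(s) - (\mathrm{trace}\,\nabla_{e_1}e_1\text{-type term})H'(s)$; more precisely $\Delta H = -e_1 e_1(H) + (\nabla_{e_1}e_1)(H) - \sum_{i\geq2}(\nabla_{e_i}e_i)(H)$, and since $e_i(H)=0$ for $i\geq 2$ this is $-H'' - \big(\sum_{i\geq2}\omega_{ii}^1\big)H'$. Using the Codazzi-derived expressions for $\omega_{ii}^1$ in terms of $\lambda_1,\lambda_a,H'$, the first biharmonic equation becomes a second-order ODE for $H$ coupled with first-order ODEs for $\lambda_2,\lambda_3$ coming from the Codazzi system, together with the algebraic constraint $nH=\lambda_1+p\lambda_2+q\lambda_3$ and $\lambda_1=-nH/2$. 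Differentiating the algebraic relations and substituting should collapse the system: one obtains that $\lambda_2,\lambda_3$ are forced to be constant (or to satisfy a polynomial relation with $H$), and then a polynomial identity in $H$ and its derivatives holding on an interval forces $H'\equiv0$, the desired contradiction. The case analysis must cover the possibilities that $\lambda_1$ coincides with $\lambda_2$ or $\lambda_3$ on a subset, and the genuinely three-distinct-curvature case; each branch is handled by the same ODE-elimination mechanism. For $c\le 0$ (in particular $\mathbb H^{n+1}$) the same computation shows not only constancy of $H$ but incompatibility of the resulting algebraic system with the sign of $c$, which is the basis for the later non-existence statement.

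The main obstacle I expect is the bookkeeping of the connection coefficients $\omega_{ij}^k$ across the three eigenspaces: one must show that the "off-diagonal" coefficients (those mixing different eigenspaces) are controlled purely by $H'$ and the $\lambda_a$, and that the "within-eigenspace" coefficients drop out of all the relevant equations, so that the problem genuinely reduces to ODEs in $s$. This is exactly the step where Chen's technique from [10] is invoked, and where the hypothesis of \emph{at most three} distinct principal curvatures is essential — with four or more the Codazzi system no longer closes up so cleanly. Once the reduction to ODEs is in place, deriving the contradiction is a finite (if somewhat lengthy) elimination, so I would budget most of the effort on making the structural/Codazzi step airtight, including the degenerate sub-cases where the multiplicities $p,q$ or the coincidences among the $\lambda_a$ vary.
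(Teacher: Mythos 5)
Your overall strategy is the same as the paper's: use the tangential biharmonic equation to make $\mathrm{grad}\,H$ a principal direction $e_1$ with curvature $-nH/2$, control the connection coefficients via Codazzi, reduce everything to functions of the parameter along the integral curves of $e_1$, and then eliminate down to a polynomial identity in $H$ with constant coefficients. However, there is a genuine gap at the step where you claim that ``comparing two directions $e_i,e_j$ within the same eigenspace'' shows the eigenvalue functions are constant along the leaves. That Codazzi comparison, $e_i(\lambda_j)=(\lambda_i-\lambda_j)\omega_{ji}^j$, only yields $e_i(\lambda_a)=0$ when the $\lambda_a$-eigenspace has dimension at least two, so there are two indices $i\neq j$ in it. When one of the two non-$\lambda_1$ eigenvalues has multiplicity one (the paper's Case B, $n-p=1$), this argument gives nothing, and the reduction to ODEs in $s$ is not yet justified. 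The paper has to run a separate, page-long argument there: it computes $R(e_2,e_n)e_1=0$ from the Gauss equation to get a differential relation (3.22), then differentiates the scalar biharmonic equation twice along $e_n$ and combines the results to force $\alpha=\tfrac{3n}{2(n-1)}H$, which contradicts the earlier constraint that $\lambda_j\neq\lambda_1$. Your plan as written would silently assume $e_n(\alpha)=0$ in this case and the subsequent ODE reduction would be unfounded.

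A second, smaller but still real issue is the final step, which you describe as ``a polynomial identity in $H$ and its derivatives holding on an interval forces $H'\equiv 0$.'' The elimination does not automatically produce a \emph{non-trivial} polynomial: the paper must actually carry out the computation to two explicit polynomial relations in $(H,\alpha)$ of degrees $9$ and $12$, check that the leading coefficient $a_{90}=\tfrac{729n^6(n-p+6)(3n-2p+17)(2n-2p+3)}{32(n-p)}$ is non-zero, and then perform a resultant-style elimination of $\alpha$ using the derivative relation $dH/d\alpha$ to reach a non-trivial constant-coefficient polynomial in $H$ alone. Without verifying non-triviality at each stage, the argument could collapse to $0=0$ and prove nothing. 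So the skeleton is right, but the two places you wave at --- the multiplicity-one eigenvalue and the non-degeneracy of the eliminated polynomial --- are precisely where the real work of the proof lives.
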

It is known that the set $M_A$ of all points of $M$, at which the
number of distinct eigenvalues of the Weingarten operator $A$ (i.e.
the principal curvatures) is locally constant, is open and dense in
$M^n$. Therefore, as $M^n$ has at most three distinct principal
curvatures everywhere, one can work only on the connected component
of $M_A$ consisting by points where the number of principal
curvatures is three (it is already known that on the connected
components of $M_A$ where the number of distinct principal
curvatures is one or two, $M^n$ is CMC, i.e. the mean curvature is
constant; in the end, by passing to the limit, $H$ will be constant
on the whole $M^n$). On that connected component, the principal
curvature functions of $A$ are smooth.

We now suppose that, on the component, the mean curvature $H$ is not
constant. Thus, there is a point $x_0$ where $({\rm
grad}H)(x_0)\neq0$. In the following, we will work on an
neighborhood of $x_0$ where $({\rm grad}H)(x_0)\neq0$ at any point.

In view of the second equation of (2.6), we have that ${\rm
grad}\,H$ is an eigenvector of the Weingarten operator $A$ with the
corresponding principal curvature $-\frac{n}{2}H$. Without loss of
generality, we choose $e_1$ such that $e_1$ is parallel to ${\rm
grad}\,H$, and therefore the Weingarten operator $A$ of $M^n$ takes
the following form with respect to a suitable orthonormal frame
$\{e_1,\ldots, e_n\}$.
\begin{eqnarray}
A=\left( \begin{array}{cccc} \lambda_1&\\
&\lambda_2 &\\&&\ddots &\\&&&\lambda_n
\end{array} \right),
\end{eqnarray}
where $\lambda_i$ are the principal curvatures and
$\lambda_1=-\frac{n}{2}H$. Since $e_1$ is parallel to ${\rm
grad}\,H$, we compute
\begin{eqnarray*}
{\rm grad}H=\sum_{i=1}^ne_i(H)e_i
\end{eqnarray*}
and hence
\begin{eqnarray}
e_1(H)\neq0,\quad e_i(H)=0, \quad i=2, 3, \ldots, n.
\end{eqnarray}
We write
\begin{eqnarray}
\nabla_{e_i}e_j=\sum_{k=1}^n\omega_{ij}^ke_k,\quad i,j=1, 2, \ldots,
n.
\end{eqnarray}
We compute the compatibility conditions $\nabla_{e_k}\langle
e_i,e_i\rangle=0$ and $\nabla_{e_k}\langle e_i,e_j\rangle=0$, which
imply respectively that
\begin{eqnarray}
\omega_{ki}^i=0,\quad \omega_{ki}^j+\omega_{kj}^i=0,
\end{eqnarray}
for $i\neq j$ and $i, j, k=1, 2, \ldots, n$. Furthermore, we deduce
from (3.1) and (3.3) and the Codazzi equation that
\begin{eqnarray}
e_i(\lambda_j)=(\lambda_i-\lambda_j)\omega_{ji}^j,\\
(\lambda_i-\lambda_j)\omega_{ki}^j=(\lambda_k-\lambda_j)\omega_{ik}^j
\end{eqnarray}
for distinct $i, j, k=1, 2, \ldots, n$.

It follows from (3.2) and (3.3) that
\begin{eqnarray*}
[e_i,e_j](H)=0,\quad i, j=2, 3, \ldots, n, \quad i\neq j,
\end{eqnarray*}
which yields
\begin{eqnarray}
\omega_{ij}^1=\omega_{ji}^1,
\end{eqnarray}
for distinct $i, j=2, 3, \ldots, n$.

We claim that $\lambda_j\neq\lambda_1$ for $j=2, 3, \ldots, n$. In
fact, if $\lambda_j=\lambda_1$ for $j\neq1$, by putting $i=1$ in
(3.5) we have that
\begin{eqnarray}
0=(\lambda_1-\lambda_j)\omega_{j1}^j=e_1(\lambda_j)=e_1(\lambda_1),
\end{eqnarray}
which contradicts the first expression of (3.2).

By the assumption, $M^n$ is a nondegenerate hypersurface with three
distinct principal curvatures. Without loss of generality, we assume
that
\begin{eqnarray}
&&\lambda_2=\lambda_3=\ldots=\lambda_p=\alpha,\nonumber\\
&&\lambda_{p+1}=\lambda_{p+2}=\ldots=\lambda_n=\beta\nonumber
\end{eqnarray}
for $\frac{n+1}{2}\leq p<n$. The multiplicities of principal
curvatures $\alpha$ and $\beta$ are $p-1$ and $n-p$, respectively.

By the definition (2.4) of $\overrightarrow{H}$, we have
$nH=\sum_{i=1}^n\lambda_i$. Hence
\begin{eqnarray}
\beta=\frac{\frac{3}{2}nH-(p-1)\alpha}{n-p}.
\end{eqnarray}
Since $\lambda_j\neq\lambda_1$ for $j=2,\ldots,n$, we obtain
\begin{eqnarray}
\alpha\neq-\frac{n}{2}H, ~~\frac{3n}{2(n-1)}H,~~
\frac{n^2-(p-3)n}{2(p-1)}H.
\end{eqnarray}
We will derive some information from (3.5).

Since $n\geq4$, it follows from (3.9) that $p-1\geq2$. For $i, j=2,
3,\ldots,p$ and $i\neq j$ in (3.5), one has
\begin{eqnarray}
e_i(\alpha)=0, \quad i=2, 3, \ldots, p.
\end{eqnarray}
Depending on the multiplicity $n-p$ of the principal curvature
$\beta$, we consider two cases: \newline{\bf Case A}: $n-p\geq2$. In
this case, for $i, j=p+1, \ldots,n$ and $i\neq j$ in (3.5) we have
\begin{eqnarray}
e_i(\beta)=0, \quad i=p+1, \ldots, n.
\end{eqnarray}
Hence, it follows directly from (3.2), (3.9), (3.11) and (3.12) that
\begin{eqnarray}
e_i(\alpha)=0, \quad i=2, \ldots, n.
\end{eqnarray}
{\bf Case B}: $n-p=1$. Then (3.11) reduces to
\begin{eqnarray}
e_i(\alpha)=0, \quad i=2, \ldots, n-1.
\end{eqnarray}
In this case, we will show that $e_n(\alpha)=0$ in the following.

Let us compute
$[e_1,e_i](H)=\big(\nabla_{e_1}e_i-\nabla_{e_i}e_1\big)(H)$ for
$i=2, \ldots, n$. From the first expression of (3.4), we have
$\omega_{i1}^1=0$. For $j=1$ and $i\neq1$ in (3.5), by (3.2) we have
$\omega_{1i}^1=0$ $(i\neq1)$. Hence we have
\begin{eqnarray}
e_ie_1(H)=0,\quad i=2, \ldots, n.
\end{eqnarray}
By (3.14), with a similar way we can show that
\begin{eqnarray}
e_ie_1(\alpha)=0,\quad i=2, \ldots, n-1.
\end{eqnarray}

 For $j=1$, $k, i\neq1$ in (3.6) we have
\begin{eqnarray*}
(\lambda_i-\lambda_1)\omega_{ki}^1=(\lambda_k-\lambda_1)\omega_{ik}^1,
\end{eqnarray*}
which together with (3.7) yields
\begin{eqnarray}
\omega_{ij}^1=0, \quad i\neq j,\quad i, j=2,\ldots n.
\end{eqnarray}
Combining (3.17) with the second equation of (3.4) gives
\begin{eqnarray}
\omega_{i1}^j=0, \quad i\neq j,\quad i, j=2,\ldots n.
\end{eqnarray}
It follows from (3.5) that
\begin{eqnarray}
\omega_{i1}^i=\frac{e_1(\lambda_i)}{\lambda_1-\lambda_i}, \quad
i=2,\ldots n.
\end{eqnarray}
For $k=2$ and $i=n$ in (3.6), we have
\begin{eqnarray*}
(\lambda_n-\lambda_j)\omega_{2n}^j=(\lambda_2-\lambda_j)\omega_{n2}^j,
\end{eqnarray*}
which yields
\begin{eqnarray*}
\omega_{2n}^j=0, \quad j=3,\ldots n-1.
\end{eqnarray*}
Hence, from the first expression of (3.4) and (3.17) we get
\begin{eqnarray}
\omega_{2n}^j=0, \quad j=1, 3,\ldots n.
\end{eqnarray}
Also, (3.5) yields
\begin{eqnarray}
\omega_{2n}^2=\frac{e_n(\alpha)}{\lambda_n-\alpha}.
\end{eqnarray}
In the following we will derive a useful equation.

From the Gauss equation and (3.1) we have $R(e_2,e_n)e_1=0$. Recall
the definition of Gauss curvature tensor
\begin{eqnarray*}
R(X,Y)Z=\nabla_X\nabla_YZ-\nabla_Y\nabla_XZ-\nabla_{[X,Y]}Z.
\end{eqnarray*}
It follows from (3.16), (3.18-21) and (3.4) that
\begin{eqnarray*}
&&\nabla_{e_2}\nabla_{e_n}e_1=\frac{e_1(\lambda_n)e_n(\alpha)}{(\lambda_1-\lambda_n)(\lambda_n-\alpha)}e_2,\\
&&\nabla_{e_n}\nabla_{e_2}e_1=e_n(\frac{e_1(\alpha)}{\lambda_1-\alpha})e_2+\frac{e_1(\alpha)}{\lambda_1-\alpha}\sum_{k=3}^{n}\omega_{n2}^ke_k,\\
&&\nabla_{[e_2,e_n]}e_1=\frac{e_n(\alpha)e_1(\alpha)}{(\lambda_n-\alpha)(\lambda_1-\alpha)}
e_2-\frac{e_1(\alpha)}{\lambda_1-\alpha}\sum_{k=3}^{n}\omega_{n2}^ke_k.
\end{eqnarray*}
Hence
\begin{eqnarray}
e_n(\frac{e_1(\alpha)}{\lambda_1-\alpha})=\frac{e_1(\lambda_n)e_n(\alpha)}{(\lambda_1-\lambda_n)(\lambda_n-\alpha)}-
\frac{e_n(\alpha)e_1(\alpha)}{(\lambda_n-\alpha)(\lambda_1-\alpha)}.
\end{eqnarray}
Note that $\lambda_1=-\frac{n}{2}H$ and
$\lambda_n=\beta=\frac{3}{2}nH-(n-2)\alpha$ in this case.

It follows from (3.5) that
\begin{eqnarray}
\omega_{ii}^1=-\omega_{i1}^i=-\frac{e_1(\lambda_i)}{\lambda_1-\lambda_i}.
\end{eqnarray}
Consider the first equation of biharmonic equations (2.6). It
follows from (3.1) and (3.19) that
\begin{equation}
-e_1e_1(H)+\big(\frac{(n-2)e_1(\alpha)}{\lambda_1-\alpha}-\frac{e_1(\lambda_n)}{\lambda_1-\lambda_n}\big)e_1(H)+H[{\lambda_1}^2+(n-2)\alpha^2+{\lambda_n}^2]=ncH.
\end{equation}
Differentiating (3.24) along $e_n$, by (3.2), (3.15) and (3.22) we
get
\begin{equation*}
\frac{2}{\lambda_1-\lambda_n}\Big(\frac{e_1(\lambda_n)}{\lambda_1-\lambda_n}-\frac{\alpha}{\lambda_1-\alpha}\Big)e_1(H)e_n(\alpha)+H\big(-3nH+2(n-1)\alpha\big)e_n(\alpha)=0.
\end{equation*}
If $e_n(\alpha)\neq0$, then the above equation becomes
\begin{equation}
\frac{2}{\lambda_1-\lambda_n}\Big(\frac{e_1(\lambda_n)}{\lambda_1-\lambda_n}-\frac{\alpha}{\lambda_1-\alpha}\Big)e_1(H)+H\big(-3nH+2(n-1)\alpha\big)=0.
\end{equation}
Differentiating (3.25) along $e_n$, using (3.22) and (3.25) one has
\begin{eqnarray}
&&\frac{2n(4-n)H+2(n-2)(n-1)\alpha}{(\lambda_1-\lambda_n)(\lambda_n-\alpha)}\Big(\frac{e_1(\lambda_n)}{\lambda_1-\lambda_n}-\frac{\alpha}{\lambda_1-\alpha}\Big)e_1(H)\nonumber
\\&&+H\big((-7n+10)nH+4(n-1)(n-2)\alpha\big)=0.
\end{eqnarray}
Therefore, combining (3.26) with (3.25) gives
\begin{eqnarray*}
(n-2)H[3nH-2(n-1)\alpha]^2=0,
\end{eqnarray*}
which implies that
\begin{eqnarray*}
\alpha=\frac{3n}{2(n-1)}H.
\end{eqnarray*}
This contradicts (3.10). Hence, we have that $e_n(\alpha)=0$.

Now we are ready to express the connection coefficients of
hypersurfaces.
\begin{lemma}
Let $M^n$ be a biharmonic hypersurface with non-constant mean
curvature in spheres $\mathbb S^{n+1}$, whose shape operator given
by (3.1) with respect to an orthonormal frame $\{e_1, \ldots,
e_n\}$. Then we have
\begin{eqnarray*}
&&\nabla_{e_1}e_1=0;~
\nabla_{e_i}e_1=\frac{e_1(\lambda_i)}{\lambda_1-\lambda_i}e_i,~i=2,\ldots,n;\\
&&\nabla_{e_i}e_j=\sum_{k=2, k\neq j}^{p}\omega_{ij}^ke_k,
~i=1,\ldots,n,~j=2,\ldots,p,~i\neq j;\\
&&\nabla_{e_i}e_i=-\frac{e_1(\lambda_i)}{\lambda_1-\lambda_i}e_1+\sum_{k=2,
k\neq i}^{p}\omega_{ii}^ke_k,
~i=2,\ldots,p;\\
&&\nabla_{e_i}e_j=\sum_{k=p+1, k\neq j}^{n}\omega_{ij}^ke_k,
~i=1,\ldots,n,~j=p+1,\ldots,n,~i\neq j;\\
&&\nabla_{e_i}e_i=-\frac{e_1(\lambda_i)}{\lambda_1-\lambda_i}e_1+\sum_{k=p+1,
k\neq i}^{n}\omega_{ii}^ke_k, ~i=p+1,\ldots,n,
\end{eqnarray*}
where $\omega_{ki}^j=-\omega_{kj}^i$ for $i\neq j$ and $i, j,
k=1,\ldots, n$.
\end{lemma}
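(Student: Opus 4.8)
The plan is to read off every connection coefficient $\omega_{ij}^k$ directly from the structure equations, using three ingredients already at hand: the metric compatibility relations $(3.4)$; the Codazzi relations $(3.5)$ and $(3.6)$ together with their corollaries $(3.7)$, $(3.17)$ and $(3.19)$; and the fact, just established, that every principal curvature is constant in the directions orthogonal to $e_1$, i.e.\ $e_i(\lambda_j)=0$ for $i\ge 2$ and all $j$. (For $j\ge 2$ this is $e_i(\alpha)=e_i(\beta)=0$, the two being equivalent by $(3.9)$; for $j=1$ it is immediate from $\lambda_1=-\tfrac n2H$ and $(3.2)$.)

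I first settle the $e_1$-column. Taking $j=1$ in $(3.5)$ gives $e_i(\lambda_1)=(\lambda_i-\lambda_1)\omega_{1i}^1$, so $\omega_{1i}^1=0$ for $i\ge 2$ (since $e_i(\lambda_1)=0$ and $\lambda_i\neq\lambda_1$), whence $\omega_{11}^i=-\omega_{1i}^1=0$ by $(3.4)$ and $\omega_{11}^1=0$; thus $\nabla_{e_1}e_1=0$. For $\nabla_{e_i}e_1$, $i\ge 2$: $\omega_{i1}^1=0$ is $(3.4)$, $\omega_{i1}^k=-\omega_{ik}^1=0$ for $k\neq 1,i$ by $(3.17)$ and $(3.4)$, and $\omega_{i1}^i=e_1(\lambda_i)/(\lambda_1-\lambda_i)$ is $(3.19)$; hence the asserted formula. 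I also record $\omega_{ii}^1=-\omega_{i1}^i=-e_1(\lambda_i)/(\lambda_1-\lambda_i)$ by $(3.4)$, the $e_1$-component of $\nabla_{e_i}e_i$.

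The substance is the block behaviour of the two eigendistributions $\mathcal D_\alpha=\mathrm{span}\{e_2,\dots,e_p\}$ and $\mathcal D_\beta=\mathrm{span}\{e_{p+1},\dots,e_n\}$: I must show each $\nabla_{e_i}e_j$ lies in the span of $e_1$ and the eigenspace of $e_j$. This reduces to three kinds of vanishing. First, $\omega_{ij}^1=0$ for $i\neq j$, $i,j\ge 2$ --- this is $(3.17)$, and for $i=1$ it follows from $\nabla_{e_1}e_1=0$ via $(3.4)$. Second, $\omega_{ij}^k=0$ whenever $e_j$ and $e_k$ lie in different eigenspaces, $i\neq j$ and $i\neq k$: writing $\omega_{ij}^k=-\omega_{ik}^j$ by $(3.4)$ and splitting on the position of $e_i$, one sees that in the Codazzi relation $(3.6)$ for the appropriate triple some eigenvalue-difference factor vanishes (because two of the three indices share an eigenspace) while another does not, so that $(3.6)$ --- aided by at most one further use of $(3.4)$, $(3.6)$ or $(3.17)$ --- forces $\omega_{ik}^j=0$. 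Third, the ``repeated-index'' coefficients $\omega_{ij}^i$ (equivalently $\omega_{ii}^j$, by $(3.4)$) with $e_i$ and $e_j$ in different eigenspaces: here $(3.6)$ does not apply, but $(3.5)$ gives $e_j(\lambda_i)=(\lambda_j-\lambda_i)\omega_{ij}^i$, and $e_j(\lambda_i)=0$ (as $j\ge 2$) with $\lambda_j\neq\lambda_i$ forces $\omega_{ij}^i=0$. These three facts, with $\omega_{ij}^j=0$ from $(3.4)$ and the $e_1$-coefficients already recorded, give all the stated expressions for $\nabla_{e_i}e_j$ and $\nabla_{e_i}e_i$; the final identity $\omega_{ki}^j=-\omega_{kj}^i$ is just $(3.4)$.

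No single step above is difficult once $(3.4)$--$(3.7)$, $(3.17)$ and $(3.19)$ are in place; the bulk of the effort is bookkeeping over where the indices sit relative to $\mathcal D_\alpha$ and $\mathcal D_\beta$, and making the case split exhaustive --- in particular the multiplicity-one subcase $n-p=1$, where $\mathcal D_\beta$ is a line, so that the $i\neq k$ branch of the second step is vacuous there but the remaining arguments still apply. The one place where the transversality $e_i(\lambda_j)=0$ ($i\ge 2$) is genuinely used, rather than pure structure-equation algebra, is in the third step above --- the repeated-index coefficients across eigenspaces --- which is not reachable from the distinct-index relation $(3.6)$ and relies precisely on the principal curvatures being constant transverse to $\mathrm{grad}\,H$.
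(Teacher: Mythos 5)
Your proposal is correct and follows essentially the same route as the paper: both read off every coefficient $\omega_{ij}^k$ from the metric relations (3.4), the Codazzi consequences (3.5)--(3.6) with suitably chosen index triples (together with (3.7), (3.17), (3.19)), and the transversal constancy $e_i(\lambda_j)=0$ for $i\ge 2$, the paper's equations (3.27)--(3.34) being exactly your three classes of vanishing plus the $e_1$-column. The only cosmetic difference is organization (you sort by type of coefficient rather than by substitution), and your explicit observation that the repeated-index, cross-block coefficients are the one place where the constancy of the principal curvatures transverse to $\mathrm{grad}\,H$ is genuinely needed is a correct and worthwhile clarification of the paper's terser presentation.
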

\begin{proof}
For $j=1$ and $i=2,\ldots,n$ in (3.5), by (3.2) we get
$\omega_{1i}^1=0$. Moreover, by the first and second expressions of
(3.4) we have
\begin{eqnarray}
\omega_{1i}^1=\omega_{11}^i=0,\quad i=1, \ldots, n.
\end{eqnarray}
For $i=1$, $j=2, \ldots, n$ in (3.5), we obtain
\begin{eqnarray}
\omega_{j1}^j=-\omega_{jj}^1=\frac{e_1(\lambda_j)}{\lambda_1-\lambda_j},
\quad j=2, \ldots, n.
\end{eqnarray}
For $i=p+1,\ldots,n$, $j=2,\ldots, p$ in (3.5), by (3.2) we have
\begin{eqnarray}
\omega_{ji}^j=-\omega_{jj}^i=0.
\end{eqnarray}
Similarly, for $i=2,\ldots, p$, $j=p+1,\ldots,n$ in (3.5), we also
have
\begin{eqnarray}
\omega_{ji}^j=-\omega_{jj}^i=0.
\end{eqnarray}
For $i=1$, by choosing $j, k=2,\ldots,p$ or $k, j=p+1,\ldots,n$
($j\neq k$) in (3.6), we have
\begin{eqnarray}
\omega_{k1}^j=\omega_{kj}^1=0.
\end{eqnarray}
For $i=2,\ldots, p$ and $j, k=p+1,\ldots, n$ ($j\neq k$) in (3.6),
we get
\begin{eqnarray}
\omega_{ki}^j=\omega_{kj}^i=0.
\end{eqnarray}
For $i=2,\ldots, p$, $j=1$ and $k=p+1,\ldots, n$ in (3.6), one has
\begin{eqnarray*}
(\alpha-\lambda_1)\omega_{ki}^1=(\beta-\lambda_1)\omega_{ik}^1,
\end{eqnarray*}
which together with (3.7) and the second expression of (3.4) gives
\begin{eqnarray}
\omega_{ki}^1=\omega_{ik}^1=\omega_{k1}^i=\omega_{i1}^k=0.
\end{eqnarray}
For $i=2,\ldots, p$, $k=1$ and $j=p+1,\ldots, n$ in (3.6), we obtain
\begin{eqnarray*}
(\beta-\alpha)\omega_{1i}^j=(\lambda_1-\alpha)\omega_{i1}^j,
\end{eqnarray*}
which together with (3.33) yields
\begin{eqnarray}
\omega_{1i}^j=\omega_{1j}^i=0.
\end{eqnarray}
Combining (3.27-3.34) with (3.4) completes the proof of the lemma.
\end{proof}
Define two smooth functions $A$ and $B$ as follows:
\begin{eqnarray}
A=\frac{e_1(\alpha)}{\lambda_1-\alpha},\quad
B=\frac{e_1(\beta)}{\lambda_1-\beta}.
\end{eqnarray}
One can compute the curvature tensor $R$ by Lemma 3.2, and apply the
Gauss equation for different values of $X$, $Y$ and $Z$. After
comparing the coefficients with respect to the orthonormal basis
$\{e_1, \ldots, e_n\}$ we get the following:
\begin{itemize}
\item $X=e_1, Y=e_2, Z=e_1$,
\begin{eqnarray}
e_1(A)+A^2=-\lambda_1\alpha-c;
\end{eqnarray}
\item $X=e_1, Y=e_n, Z=e_1$,
\begin{eqnarray}
e_1(B)+B^2=-\lambda_1\beta-c;
\end{eqnarray}
\item $X=e_n, Y=e_2, Z=e_n$,
\begin{eqnarray}
AB=-\alpha\beta-c.
\end{eqnarray}
\end{itemize}
Note that (3.38) is obtained by comparing the coefficient of $e_2$
in the equation.

Compute the first equation of biharmonic equations (2.6) again. It
follows from (3.1) and Lemma 3.2 that
\begin{equation}
-e_1e_1(H)-[(p-1)A+(n-p)B]e_1(H)+H[\lambda^2_1+(p-1)\alpha^2+(n-p)\beta^2]=ncH.
\end{equation}
\begin{lemma}
The functions $A$ and $B$ are related by
\begin{eqnarray}
&&[(4-p)A+(3+p-n)B]e_1(H)+\frac{3n^2(n+6-p)}{4(n-p)}H^3\nonumber\\
&&-\frac{3n(n-2+4p)}{2(n-p)}H^2\alpha+\frac{3n(p-1)}{n-p}H\alpha^2-3c(n+1)H=0.
\end{eqnarray}
\end{lemma}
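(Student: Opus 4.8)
The plan is to extract the relation (3.41) from the second-order biharmonic equation (3.40) by differentiating it along $e_1$ and then eliminating the third-order term $e_1 e_1 e_1(H)$. First I would rewrite (3.40) using the substitutions $\lambda_1 = -\frac{n}{2}H$, $\beta = \frac{\frac{3}{2}nH - (p-1)\alpha}{n-p}$ from (3.9), and the definitions (3.36) of $A$ and $B$, so that $e_1(\alpha) = (\lambda_1-\alpha)A$ and $e_1(\beta) = (\lambda_1-\beta)B$ can be used to handle derivatives of $\alpha$ and $\beta$. Note that $e_1(\lambda_1) = -\frac{n}{2}e_1(H)$, so $e_1 e_1(H)$ can be re-expressed, and differentiating (3.40) once more along $e_1$ will produce a term $-e_1 e_1 e_1(H)$ together with terms in $e_1(A)$, $e_1(B)$, $e_1(\alpha)$, and $e_1 e_1(H)$.

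The key device is that $e_1(A)$ and $e_1(B)$ are not new unknowns: by the Gauss-equation identities (3.37) and (3.38) we have $e_1(A) = -\lambda_1\alpha - c - A^2$ and $e_1(B) = -\lambda_1\beta - c - B^2$. Substituting these in eliminates all second derivatives of $A$ and $B$. To remove the remaining third-order term $e_1 e_1 e_1(H)$, I would solve (3.40) for $e_1 e_1(H)$, differentiate that expression along $e_1$ to get an independent formula for $e_1 e_1 e_1(H)$, and equate it with what comes from differentiating (3.40) directly — or, more cleanly, just differentiate (3.40) once, then use (3.40) itself to substitute for $e_1 e_1(H)$ wherever it reappears. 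After collecting terms, the coefficient of $e_1(H)$ will be a combination of $A$ and $B$ and the resulting identity should organize itself into the stated form, with the polynomial tail in $H$, $H^2\alpha$, $H\alpha^2$ and the constant-curvature term $-3c(n+1)H$ arising from the algebraic (non-derivative) parts of (3.37)--(3.40) after clearing the denominator $n-p$ coming from $\beta$.

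The main obstacle I anticipate is purely computational bookkeeping: one must carefully track how $e_1(\alpha)$ enters (through $A(\lambda_1-\alpha)$ and its further $e_1$-derivative, which itself involves $e_1(\alpha)$ again and hence $A$ quadratically), make consistent use of (3.9) to trade $\beta$ for $\alpha$ and $H$ at every stage, and verify that all terms not proportional to $e_1(H)$ collapse exactly into the displayed cubic-in-$(H,\alpha)$ expression rather than leaving residual $A^2$, $B^2$, or $AB$ terms — the latter must be absorbed using (3.38), $AB = -\alpha\beta - c$. Keeping the multiplicities $p-1$ and $n-p$ straight in every coefficient, and confirming the precise numerical coefficients $\frac{3n^2(n+6-p)}{4(n-p)}$, $\frac{3n(n-2+4p)}{2(n-p)}$, and $\frac{3n(p-1)}{n-p}$, is where essentially all the work lies; the conceptual content is just "differentiate the scalar biharmonic equation twice and substitute the Gauss relations."
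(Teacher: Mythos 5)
Your overall strategy is not the one that proves this lemma, and the specific device you propose for handling the resulting third-order term does not work. If you differentiate the scalar biharmonic equation (3.39) along $e_1$, you create the new unknown $e_1e_1e_1(H)$. Both of your suggestions for removing it are circular: solving (3.39) for $e_1e_1(H)$, differentiating that, and equating it with the direct differentiation of (3.39) produces the identity $0=0$; and substituting (3.39) back in for $e_1e_1(H)$ leaves $e_1e_1e_1(H)$ untouched. So the differentiated equation merely determines the third derivative of $H$ and yields no first-order constraint of the kind asserted in the lemma. (The paper does differentiate the lemma's relation along $e_1$ later, to obtain (3.49), but that is a subsequent step, not the proof of the lemma.)

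The idea you are missing is that (3.40) is a compatibility condition of an overdetermined system of second-order equations, obtained without differentiating the biharmonic equation at all. Rewrite the two Gauss identities (3.36) and (3.37) as second-order ODEs along $e_1$ for $\alpha$ and $\beta$ (the paper's (3.41) and (3.42)), e.g. $e_1e_1(\alpha)+2Ae_1(\alpha)-Ae_1(\lambda_1)+(\lambda_1\alpha+c)(\lambda_1-\alpha)=0$. Together with the biharmonic equation (3.39) this gives three expressions for the three second derivatives $e_1e_1(\alpha)$, $e_1e_1(\beta)$, $e_1e_1(H)$ in terms of first-order data; but the trace identity $(p-1)\alpha+(n-p)\beta=\tfrac{3}{2}nH$ coming from (3.9) forces the linear relation $(p-1)e_1e_1(\alpha)+(n-p)e_1e_1(\beta)=\tfrac{3n}{2}\,e_1e_1(H)$ among them. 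Substituting the three expressions into this relation cancels every second derivative and leaves exactly one first-order identity, which is (3.40). Your anticipated difficulty with residual $A^2$ and $B^2$ terms is real, and the cure is the cross-substitutions (3.43)--(3.44): express $e_1(\alpha)$ through $e_1(H)$ and $B$, and $e_1(\beta)$ through $e_1(H)$ and $A$, so that all quadratic terms appear only as $AB$, which is algebraic by (3.38). Without the second-derivative cancellation step your computation cannot close.
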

\begin{proof}
From (3.35), (3.36) and (3.37) respectively reduce to
\begin{eqnarray}
&&e_1e_1(\alpha)+2Ae_1(\alpha)-Ae_1(\lambda_1)+(\lambda_1\alpha+c)(\lambda_1-\alpha)=0,\\
&&e_1e_1(\beta)+2Be_1(\beta)-Be_1(\lambda_1)+(\lambda_1\beta+c)(\lambda_1-\beta)=0.
\end{eqnarray}
By (3.9), it follows from the second expression of (3.35) that
\begin{eqnarray}
e_1(\alpha)=\frac{n-p}{p-1}\big(\frac{3n}{2(n-p)}e_1(H)-e_1(\beta)\big),\nonumber\\
=\frac{3n}{2(p-1)}e_1(H)-\frac{n-p}{p-1}B(\lambda_1-\beta).
\end{eqnarray}
Similarly, we have
\begin{eqnarray}
e_1(\beta)=\frac{3n}{2(n-p)}e_1(H)-\frac{p-1}{n-p}A(\lambda_1-\alpha).
\end{eqnarray}
Substitute (3.9) into (3.42). Eliminating $e_1e_1(H)$ and
$e_1e_1(\alpha)$, from (3.38), (3.39) and (3.41-44) we obtain the
desired equation (3.40).
\end{proof}

By the second expression of (3.35) and (3.9), (3.44) reduces to
\begin{eqnarray}
e_1(H)=-\big[\frac{p-1}{3}H+\frac{2(p-1)}{3n}\alpha\big]A+\big[-\frac{n+3-p}{3}H+\frac{2(p-1)}{3n}\alpha\big]B.
\end{eqnarray}
Substituting (3.45) into (3.40), by (3.38) we have
\begin{eqnarray}
&&(4-p)(p-1)(nH+2\alpha)A^2+(3+p-n)[n(n+3-p)H-2(p-1)\alpha]B^2\nonumber\\
&&=\frac{9n^3(n+6-p)}{4(n-p)}H^3+\frac{3n^2(p-1)(2p-2n-15)}{2(n-p)}H^2\alpha\nonumber\\
&&+\frac{n(p-1)(-2p^2+2pn+11p+n-12)}{n-p}H\alpha^2-\frac{2(p-1)^2(2p-n-1)}{n-p}\alpha^3\nonumber\\
&&+c(2p^2-5p-2np-4n)H+2c(p-1)(2p-n-1)\alpha.
\end{eqnarray}
Multiplying $A$ and $B$ successively on the equation (3.40), using
(3.38) one gets respectively
\begin{eqnarray}
&&(4-p)A^2e_1(H)=(3+p-n)(\alpha\beta+c) e_1(H)\\
&&+\Big[\frac{3n^2(n+6-p)}{4(n-p)}H^3-\frac{3n(n-2+4p)}{2(n-p)}H^2\alpha+\frac{3n(p-1)}{n-p}H\alpha^2-3c(n+1)H\Big]A=0.\nonumber\\
&&(3+p-n)B^2e_1(H)=(4-p)(\alpha\beta+c) e_1(H)\\
&&+\Big[\frac{3n^2(n+6-p)}{4(n-p)}H^3-\frac{3n(n-2+4p)}{2(n-p)}H^2\alpha+\frac{3n(p-1)}{n-p}H\alpha^2-3c(n+1)H\Big]B=0.\nonumber
\end{eqnarray}
Differentiating (3.40) along $e_1$, and using (3.36-37) and (3.39)
we get
\begin{eqnarray}
&&\Big[(4-p)(\frac{n}{2}H\alpha-A^2-c)+(3+p-n)(\frac{n}{2}H\beta-B^2-c)\Big]e_1(H)\nonumber\\
&&-\Big[(4-p)A+(3+p-n)B\Big]\Big[(p-1)A+(n-p)B\Big]e_1(H)\nonumber\\
&&+\Big[(4-p)A+(3+p-n)B\Big]\Big[\frac{n^2}{4}H^3+(p-1)H\alpha^2+(n-p)H\beta^2-ncH\Big]\nonumber\\
&&+\Big[\frac{9n^2(n+6-p)}{4(n-p)}H^2-\frac{3n(n-2+4p)}{n-p}H\alpha+\frac{3n(p-1)}{n-p}\alpha^2-3c(n+1)\Big]e_1(H)\nonumber\\
&&-\frac{3n(n-2+4p)}{2(n-p)}H^2e_1(\alpha)+\frac{6n(p-1)}{n-p}H\alpha
e_1(\alpha)=0.
\end{eqnarray}
Substituting (3.47), (3.48), (3.40) into (3.49), and using the first
expression of (3.35) we obtain
\begin{eqnarray*}
&&\Big[\frac{3n^2(2n-2p+21)}{4(n-p)}H^2-\frac{3n(5p+1)}{n-p}H\alpha+\frac{(p-1)(2n+7)}{n-p}\alpha^2-2c(n+5)\Big]e_1(H)\nonumber\\
&&+\Big[\frac{n^2(2pn-2p^2+7n+17p+30)}{4(n-p)}H^3-\frac{3n(3np+2p^2+4p-3n-6)}{2(n-p)}H^2\alpha\nonumber\\
&&+\frac{(p-1)(2np-2n+p-4)}{n-p}H\alpha^2-c(2pn+3p+4n)H\Big]A\nonumber\\
&&+\Big[\frac{n^2\big(2(n-p)^2+15(n-p)+45\big)}{4(n-p)}H^3-\frac{3n(n^2+np-2p^2+10p+n-8)}{2(n-p)}H^2\alpha\nonumber\\
&&+\frac{(p-1)(2n^2-2np+7n-p-3)}{n-p}H\alpha^2-c(2n^2-2np+9n-3p+3)H\Big]B=0.
\end{eqnarray*}
Moreover, it follows from (3.45) that the above equation further
reduces to
\begin{eqnarray}
&&\Big[\frac{9}{4}n^3(3n-2p+17)H^3-\frac{3}{2}n^2(-6p^2+11np+43p-11n-37)H^2\alpha\\
&&+n(p-1)(4np-4n+26p+1)H\alpha^2-2(p-1)^2(2n+7)\alpha^3\nonumber\\
&&+cn(n-p)(-4np-14n+7p-10)H+4c(n-p)(n+5)(p-1)\alpha\Big]A\nonumber\\
&&-\Big[\frac{9}{2}(2n-2p+3)H^3+\frac{9}{2}n^2(2p^2+n^2-3np-7p+n-3)H^2\alpha\nonumber\\
&&-2n(p-1)(2n^2-2np+4n-13p-18)H\alpha^2-2(p-1)^2(2n+7)\alpha^3\nonumber\\
&&+cn(n-p)(-4n^2+4np-11n+9p+21)H-4c(n-p)(n+5)(p-1)\alpha\Big]B=0.\nonumber
\end{eqnarray}
Now all the desired equations (3.38), (3.46) and (3.50) concerning
$A$ and $B$ are obtained.

In order to write handily, we introduce several notions: $L, M$
denoting the coefficients of $A$ and $B$ respectively in (3.50), and
$N$ denoting the right hand side of quality in equation (3.46). Then
(3.46) and (3.50) become
\begin{eqnarray}
&&(4-p)(p-1)(nH+2\alpha)A^2\\
&&+(3+p-n)[n(n+3-p)H-2(p-1)\alpha]B^2=N,\nonumber\\
&&LA-MB=0.
\end{eqnarray}
Multiplying $LM$ on the equation (3.51), using (3.52) and (3.38) we
can eliminate both $A$ and $B$. Hence, we have
\begin{eqnarray}
&&(4-p)(p-1)(nH+2\alpha)M^2\big(\frac{\frac{3}{2}nH\alpha-(p-1)\alpha^2}{n-p}+c\big)\nonumber\\
&&+(3+p-n)[n(n+3-p)H-2(p-1)\alpha]L^2\big(\frac{\frac{3}{2}nH\alpha-(p-1)\alpha^2}{n-p}+c\big)\nonumber\\
&&+LMN=0.
\end{eqnarray}
In view of (3.53), we note that the equation should take the
following form:
\begin{eqnarray}
&&a_{90}H^9+a_{81}H^8\alpha+a_{72}H^7\alpha^2+a_{63}H^6\alpha^3+a_{54}H^5\alpha^4
+a_{45}H^4\alpha^5\nonumber\\
&&+a_{36}H^3\alpha^6+a_{27}H^2\alpha^7+a_{18}H\alpha^8+a_{09}\alpha^9+c(a_{70}H^7+a_{61}H^6\alpha\nonumber\\
&&+a_{52}H^5\alpha^2+a_{43}H^4\alpha^3+a_{34}H^3\alpha^4+a_{25}H^2\alpha^5+a_{16}H\alpha^6+a_{07}\alpha^7\nonumber\\
&&+a_{50}H^5+a_{41}H^4\alpha+a_{32}H^3\alpha^2+a_{23}H^2\alpha^3+a_{14}H\alpha^5+a_{05}\alpha^5\nonumber\\
&&+a_{30}H^3+a_{21}H^2\alpha+a_{12}H\alpha^2+a_{03}\alpha^3)=0,
\end{eqnarray}
where the coefficients $a_{ij}$ ($i, j=0,\ldots,9$) are constants
concerning $n$ and $p$.

From (3.53), (3.50) and (3.46), we compute $a_{90}$ and $a_{09}$ as
follows
\begin{eqnarray*}
a_{90}=\frac{729n^6(n-p+6)(3n-2p+17)(2n-2p+3)}{32(n-p)}, \quad
a_{09}=0.
\end{eqnarray*}
Since $n>p$, it is easy to see that $a_{90}\neq0$.

Note that $\alpha$ is not constant in general. In fact, if $\alpha$
is a constant, then (3.54) becomes an algebraic equation of $H$ with
constant coefficients. Thus, the real function $H$ satisfies a
polynomial equation $q(H)=0$ with constant coefficients, therefore
it must be a constant. The conclusion follows immediately.

Consider an integral curve of $e_1$ passing through $p=\gamma(t_0)$
as $\gamma(t), t\in I$. Since $e_i(H)=e_i(\alpha)=0$ for
$i=2,\ldots, n$ and $e_1(H), e_1(\alpha)\neq0$, we can assume
$t=t(\alpha)$ and $H=H(\alpha)$ in some neighborhood of
$\alpha_0=\alpha(t_0)$.

From the first expression of (3.35), (3.45) and (3.52), we have
\begin{eqnarray}
\frac{dH}{d\alpha}&=&\frac{dH}{dt}\frac{dt}{d\alpha}=\frac{e_1(H)}{e_1(\alpha)}\nonumber\\
&=&\frac{-(\frac{p-1}{3}H+\frac{2(p-1)}{3n}\alpha)A+(-\frac{n+3-p}{3}H+\frac{2(p-1)}{3n}\alpha)B}{(-\frac{n}{2}H-\alpha)A}\nonumber\\
&=&\frac{2(p-1)}{3n}+\frac{(-\frac{n+3-p}{3}H+\frac{2(p-1)}{3n}\alpha)B}{(-\frac{n}{2}H-\alpha)A}\nonumber\\
&=&\frac{2(p-1)}{3n}+\frac{2\big((n+3-p)H-2(p-1)\alpha\big)L}{3n(nH+2\alpha)M}.
\end{eqnarray}

Differentiating (3.54) with respect to $\alpha$ and substituting
$\frac{dH}{d\alpha}$ from (3.55), combining these with (3.50) we get
another algebraic equation of twelfth degree concerning $H$ and
$\alpha$
\begin{eqnarray}
&&b_{12,0}H^{12}+b_{11,1}H^{11}\alpha+b_{10,2}H^{10}\alpha^2+b_{93}H^9\alpha^3+b_{84}H^8\alpha^4
+b_{75}H^7\alpha^5\nonumber\\
&&+b_{66}H^6\alpha^6+b_{57}H^5\alpha^7+b_{48}H^4\alpha^8+b_{39}H^3\alpha^9+b_{2,10}H^2\alpha^{10}+b_{1,11}H\alpha^{11}\nonumber\\
&&+b_{0,12}\alpha^{12}+c(b_{10,0}H^{10}+b_{91}H^9\alpha+b_{82}H^8\alpha^2+b_{73}H^7\alpha^3+b_{64}H^6\alpha^4
\nonumber\\
&&+b_{55}H^5\alpha^5+b_{46}H^4\alpha^6+b_{37}H^3\alpha^7+b_{28}H^2\alpha^8+b_{19}H\alpha^9+b_{0,10}\alpha^{10}+b_{80}H^8\nonumber\\
&&+b_{71}H^7\alpha+b_{62}H^6\alpha^2+b_{53}H^5\alpha^3+b_{44}H^4\alpha^4+b_{35}H^3\alpha^5+b_{26}H^2\alpha^6+b_{17}H\alpha^7\nonumber\\
&&+b_{08}\alpha^8+b_{60}H^6+b_{51}H^5\alpha+b_{42}H^4\alpha^2+b_{33}H^3\alpha^3+b_{24}H^2\alpha^4+b_{15}H\alpha^5\nonumber\\
&&+b_{06}\alpha^6+b_{40}H^4+b_{31}H^3\alpha+b_{22}H^2\alpha^2+b_{13}H\alpha^3+b_{04}\alpha^4)=0,
\end{eqnarray}
where the coefficients $b_{ij}$ ($i, j=0,\ldots,12$) are constants
concerning $n$ and $p$.

Note that equation (3.56) is non-trivial and different from (3.54).

We rewrite (3.54) and (3.56) respectively in the following forms
\begin{eqnarray}
\sum_{i=0}^8q_i(H)\alpha^i=0,\qquad \sum_{j=0}^{12}{\bar
q}_j(H)\alpha^j=0,
\end{eqnarray}
where $q_i(H)$ and ${\bar q}_j(H)$ are polynomials concerning
function $H$.

We may eliminate $\alpha$ between the two equations of (3.57).
Multiplying ${\bar q}_{12}(H)\alpha^4$ and $q_8(H)$ respectively on
the first and second equations of (3.57), we obtain a new polynomial
equation of $\alpha$ with eleventh degree. Combining this equation
with the first equation of (3.57), we successively obtain a
polynomial equation of $\alpha$ with tenth degree. In a similar way,
by using the first equation of (3.57) and its consequences we are
able to gradually eliminate $\alpha$.

At last, we obtain a non-trivial algebraic polynomial equation of
$H$ with constant coefficients. Therefore, we conclude that the real
function $H$ must be a constant, which contradicts our original
assumption.

In summary, we have proved Theorem 3.1 as stated in the beginning
part in this section.

Now we present our main theorem in the following.
\begin{theorem}
Let $M^n$ be an orientable compact proper biharmonic hypersurface
with at most three distinct principal curvatures in $\mathbb
S^{n+1}$. Then $M^n$ is either the hypersphere $\mathbb
S^n(1/\sqrt2)$ or the Clifford hypersurface $\mathbb
S^{n_1}(1/\sqrt2)\times\mathbb S^{n_2}(1/\sqrt2)$ with $n_1+n_2=n$
and $n_1\neq n_2$.
\end{theorem}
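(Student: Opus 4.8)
The plan is to reduce Theorem~3.6 to the three results already established or cited above, namely Theorem~3.1 (every such hypersurface is CMC), Theorem~2.3 (the classification when there are at most two distinct principal curvatures), and Theorem~2.7 (non-existence of compact CMC proper biharmonic hypersurfaces with three distinct principal curvatures in a sphere); the only genuine work is to bring the hypothesis into the form in which these apply. First I would apply Theorem~3.1 with $c=1$: the mean curvature $H$ of $M^n$ is constant, and since the immersion is proper, i.e.\ non-minimal, $H$ is a nonzero constant. Because $\operatorname{grad}H=0$, the second equation of the biharmonic system $(2.6)$ holds automatically, while the first one becomes $H\operatorname{trace}A^2=nH$, whence $\operatorname{trace}A^2=|A|^2=n$ is constant; by the Gauss equation the scalar curvature of $M^n$ is then constant as well. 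Thus $M^n$ is a compact CMC hypersurface of $\mathbb S^{n+1}$ with constant $|A|^2=n$ and at most three distinct principal curvatures.

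Next I would split into the two natural cases. If $M^n$ has at most two distinct principal curvatures at every point, Theorem~2.3 applies, and since $M^n$ is compact, hence complete, $M^n$ is exactly the hypersphere $\mathbb S^n(1/\sqrt2)$ or the Clifford hypersurface $\mathbb S^{n_1}(1/\sqrt2)\times\mathbb S^{n_2}(1/\sqrt2)$ with $n_1+n_2=n$ and $n_1\neq n_2$ --- which is precisely the assertion. It therefore remains to exclude the possibility that $M^n$ has three distinct principal curvatures at some point. The locus of such points is the non-vanishing set of the discriminant of the characteristic polynomial of $A$, hence open, and it is a union of connected components of the dense open set $M_A$; on any such component the principal-curvature functions are smooth, $H$ is the nonzero constant just produced, and $|A|^2=n$. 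The structure equations $(3.4)$--$(3.6)$ together with the Codazzi identity then put us in the configuration analysed by Balmu\c{s}--Montaldo--Oniciuc in \cite{BMO20102}, and, combined with the compactness of $M^n$, their argument (Theorem~2.7) yields a contradiction. Hence $M^n$ has at most two distinct principal curvatures everywhere, and the previous case finishes the proof.

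The main obstacle I anticipate is exactly this last step: Theorem~2.7 is stated for hypersurfaces having three distinct principal curvatures \emph{everywhere}, whereas a priori we control this only on an open dense subset, the exceptional closed set where two principal curvatures collide being merely nowhere dense rather than empty. I expect to bridge this either by observing that the Stokes-type integral argument underlying Theorem~2.7 uses only density of the three-curvature locus --- a globally defined nonnegative function on $M^n$ is forced to vanish identically and then gives a pointwise contradiction at any point with three distinct curvatures --- or by invoking real-analyticity of the biharmonic immersion (elliptic regularity for the fourth-order equation $(1.1)$, whose coefficients are real-analytic when the ambient space is a space form) to control that exceptional set. Once this point is settled, everything else is bookkeeping: checking that $H$ is a nonzero constant, reading off $|A|^2=n$, and assembling the two cases.
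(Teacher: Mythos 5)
Your proposal follows essentially the same route as the paper: Theorem 3.1 gives constant mean curvature, after which Theorem 2.3 settles the case of at most two distinct principal curvatures and the Balmu\c{s}--Montaldo--Oniciuc non-existence result (Theorem 2.6 in the paper's numbering, your ``Theorem 2.7'') excludes three distinct principal curvatures on a compact CMC proper biharmonic hypersurface. The subtlety you flag --- that the non-existence theorem is stated for hypersurfaces with three distinct principal curvatures \emph{everywhere}, while a priori the count could drop on a closed nowhere-dense subset --- is genuine, but the paper's own three-line proof does not address it at all, so your treatment is, if anything, more careful than the original.
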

\begin{proof}
We only need to deal with the case of proper biharmonic
hypersurfaces $M^n$ with three distinct principal curvatures in
$\mathbb S^{n+1}$. According to Theorem 3.1, $M^n$ has constant mean
curvature $H$. Hence, Theorem 2.6 impies that this case is
impossible, which together with Theorem 2.3 leads to the conclusion.
\end{proof}
\begin{remark}
Theorem 3.4 extends Balmu\c{s} et al's results of Theorems 2.3, 2.5
and 2.6 in [3] and [4].
\end{remark}
A result due to Oniciuc \cite{oniciuc2002} says that a CMC
biharmonic immersion in a space form $R^n(c)$ for $c\leq0$ is
minimal. Hence, combining this with Theorem 3.1 implies immediately
that
\begin{theorem}
There exist no proper biharmonic hypersurfaces with at most three
distinct principal curvatures in Euclidean space $\mathbb E^{n+1}$
or hyperbolic spaces $\mathbb H^{n+1}$ with arbitrary dimension.
\end{theorem}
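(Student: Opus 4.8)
The plan is to derive this theorem as a short corollary of Theorem~3.1 together with a rigidity result of Oniciuc \cite{oniciuc2002}: a biharmonic immersion of constant mean curvature into a space form $R^{m}(c)$ with $c\le 0$ is necessarily minimal. I would argue by contradiction. Suppose $M^n$ is a proper (hence non-minimal) biharmonic hypersurface with at most three distinct principal curvatures in $\mathbb E^{n+1}$ or $\mathbb H^{n+1}$. Both ambient spaces are space forms $R^{n+1}(c)$ with $c\le 0$ (namely $c=0$ for $\mathbb E^{n+1}$ and $c<0$ for $\mathbb H^{n+1}$), so Theorem~3.1 applies and shows that the mean curvature $H$ of $M^n$ is constant.

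Once we are in the constant mean curvature case, Oniciuc's result forces $H\equiv 0$, i.e.\ $M^n$ is minimal, contradicting the properness of the immersion; hence no such hypersurface exists. One can also close the argument directly without invoking \cite{oniciuc2002}: with $H$ constant one has $\Delta H=0$, so the first equation of (2.6) reduces to $H\bigl(\operatorname{trace}A^2-nc\bigr)=0$; since $\operatorname{trace}A^2\ge nH^2$ by the Cauchy--Schwarz inequality, a nonzero constant $H$ would give $0<nH^2\le nc\le 0$, which is absurd, and therefore $H=0$.

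Because every ingredient is already in place, I do not expect any genuine obstacle at this last step; all the substantive work lives in the proof of Theorem~3.1. The single point worth a passing remark is the range of dimensions: Theorem~3.1 is carried out under the standing hypothesis $n\ge 4$, but the low-dimensional cases add nothing new, since for $n\le 2$ a hypersurface has at most two distinct principal curvatures and for $n=3$ the constancy of $H$ for biharmonic hypersurfaces of $\mathbb E^{4}(c)$ is exactly Theorem~2.4. Hence in every dimension the reduction to constant mean curvature is available, and the contradiction above then finishes the proof.
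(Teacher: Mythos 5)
Your proposal is correct and follows essentially the same route as the paper: Theorem~3.1 gives constant mean curvature, and Oniciuc's rigidity result for CMC biharmonic immersions into space forms with $c\le 0$ then forces minimality, contradicting properness. Your alternative direct closing via $\operatorname{trace}A^2\ge nH^2$ in the first equation of (2.6), and your remark on the low-dimensional cases, are welcome additions but do not change the substance of the argument.
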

\begin{remark}
T. Hasanis and T. Vlachos \cite{HasanisVlachos1995} proved that
there exists no proper biharmonic hypersurface in $\mathbb E^4$ (see
also \cite{defever1998}). And, it was proved recently by the author
in \cite{fu} that there exists no proper biharmonic hypersurface
with at most three distinct principal curvatures in Euclidean
spaces. Thus, Theorem 3.6 recovers all the results in
\cite{HasanisVlachos1995, defever1998, fu} and \cite{BMO20102} for
hyperfaces in $\mathbb H^4$.
\end{remark}
\begin{remark}
Note that Theorem 3.6 gives an affirmative partial answer to the
Generalized Chen's conjecture.
\end{remark}


\end{document}